\theoremstyle{plain}
\newtheorem{theorem}{Theorem}
\newtheorem{definition}[theorem]{Definition}
\newtheorem{example}[theorem]{Example}
\newtheorem{proposition}[theorem]{Proposition}
\newtheorem{remark}[theorem]{Remark}
\def\N{\mathbb{N}}
\def\Q{\mathbb{Q}}
\def\R{\mathbb{R}}
\def\P{\mathcal{P}}
\def\Oo{\mathcal{O}}
\def\ex{\mathbb{E}}
\def\eins{\mathds{1}}
\def\lin{{\rm lin}}
\def\ve{\varepsilon}
\numberwithin{equation}{section}
\begin{document}

\title[Optimal approximation of Skorohod integrals - examples with substandard rates]{Optimal approximation of Skorohod integrals - examples with substandard rates}\label{}

\author[P. Parczewski]{Peter Parczewski}


\address{University of Mannheim, Institute of Mathematics A5, 6, D-68131 Mannheim, Germany.}
\email{parczewski@math.uni-mannheim.de}

\date{\today}

\begin{abstract}
We consider optimal approximation with respect to the mean square error of It\^o integrals and Skorohod integrals given an equidistant discretization of the Brownian motion. We obtain for suitable integrands optimal rates smaller than the standard $n^{-1}$, where $n$ denotes the number of evaluations of the Brownian motion. For the It\^o integral this is due to the Weyl equidistribution theorem and discontinuities of the integrand. For the Skorohod integral the situation is more complicated and relies on a reformulation of the Wiener chaos expansion. Here, we specify conditions on the integrands to obtain optimal rates $n^{-1/2}$, respectively, examples of lower rates.
\end{abstract}

\keywords{Skorohod integral, optimal approximation, S-transform, Wiener chaos, Wick product}

\subjclass[2010]{60H05, 60H07, 65C30}

\maketitle

\section{Introduction}

Suppose a Brownian motion $(W_{t})_{t \in [0,1]}$ on the probability space $(\Omega, \mathcal{F}, P)$, where the $\sigma$-field $\mathcal{F}$ is generated by the Brownian motion and completed by null sets. Our aim in this note is to investigate the mean square error
\begin{align*}
e_{n} := \ex\left[\left(\int_{0}^{1} u_s dW_s - \ex\left[\left.\int_{0}^{1} u_s dW_s\right|W_{\frac{1}{n}}, W_{\frac{2}{n}}, \ldots, W_1 \right]\right)^2\right]^{1/2}
\end{align*}
for some It\^o integrals and Skorohod integrals with sufficiently irregular integrands $(u_t)_{t \in [0,1]}$. Our main result is that
\begin{align}\label{eq:intro1}
e_{n_k} \sim c \, n_k^{-1/2} 
\end{align}
for appropriate strictly increasing sequences $(n_k)_{k \in \N}$ and appropriate integrands. The Skorohod integral is a natural extension of the It\^o integral to nonadapted integrands, see e.g. \cite{Nualart, Di_Nunno_und_so}. The asymptotic behaviour in \eqref{eq:intro1} is in contrast to the ordinary optimal approximation error 
\begin{align}\label{eq:intro2}
e_n \sim c \, n^{-1}  
\end{align}
for It\^o integrable and sufficiently smooth integrands $u_s = f(s,W_s)$, see e.g. \cite{Mueller_Gronbach, Przybylowicz, Kloeden_Platen}.


In \cite{NP} we considered simple Skorohod integrands of the type
\begin{align}\label{eq:integrandIntro}
\left(u_{t}= f(t, W_{t}, W_{\tau_2},\ldots, W_{\tau_K})\right)_{t \in [0,1]}  
\end{align}
for some $f \in C([0,1] \times \R^{K}; \R)$ and fixed $\tau_2,\ldots, \tau_K \in (0,1]$. Under some smoothness and growth conditions on $f$ and $\tau_2,\ldots, \tau_K \in \frac{1}{n}\N$, we obtained \eqref{eq:intro2} where the constant $c$ is a natural extension of the constant in the It\^o case. However, treating integrands with the condition $\tau_2,\ldots, \tau_K \in \frac{1}{n}\N$ violated, the situation changes extremely. Starting from the (stationary) Skorohod integrable process
\[
u_t = \sum_{k \geq 0} \frac{1}{k!} h^{k}_{\{kT\}}(W_{\{kT\}}), 
\]
for the Hermite polynomials $h^{k}_{\alpha}(x)$ (see below), the fractional part $\{kT\} := kT - \lfloor kT \rfloor $ and some fixed $T \in (0,1)\setminus \Q$, we are interested into the optimal approximation problem with integrands beyond the ordinary regularity assumptions. For these integrands no finite extension of the equidistant information to $W_{1/n}, \ldots, W_{1}, W_{x_1}, \ldots, W_{x_N}$ suffices to evaluate the corresponding Skorohod integral exactly.

Thus, in contrast to \eqref{eq:intro2}, under some smoothness assumptions on the integrand \eqref{eq:integrandIntro}, we obtain as the main result in Theorem \ref{thm:OptApproxIrrational} a strictly increasing sequence $(n_k)_{k \in \N}$ and the asymptotic behaviour \eqref{eq:intro1} with the new constant
\begin{align*}
c_2 &= \frac{1}{2}  \left(\sum_{i=2}^{K}\ex\left[\left(\int_{0}^{1}u^i_s dW_s\right)^2\right]\right)^{1/2}
\end{align*}
depending on further Skorohod integrals, where the integrands $(u^i_t)_{t \in [0,1]}$ are specified by the initial process $(u_t)_{t\in [0,1]}$ in \eqref{eq:integrandIntro}. 

We observe the same optimal rate \eqref{eq:intro2} for It\^o integrals with discontinuous integrands and specify the constant $c$ (Theorem \ref{theorem:MainTheoremIto}). 

Dealing with stationary integrands of more irregular type as $u_t =|W_T|$ for some fixed $T \in (0,1) \setminus \Q$, we obtain smaller optimal rates, e.g. optimal rates below $n^{-1/2}$.

The recent work \cite{Jentzen_MG_Y} presents It\^o stochastic differential equations with infinitely often differentiable and globally bounded coefficients such that the strong convergence rate of all numerical schemes based on a finite information of the driving Brownian motion is not polynomial.


However, in contrast to that, our results focus on the Skorohod integral. The integrand requires appropriate continuity assumptions to ensure the existence of the Skorohod integral itself and the elements in the constants in \eqref{eq:intro1}.

The paper is organized as follows: In Section \ref{section_1} we give some preliminaries on the Skorohod integral and present some helpful reformulations of the Wiener chaos decomposition. The main results on the optimal approximation of Skorohod integrals of integrands of the type $u_{t}= f(t, W_{t}, W_{\tau_2},\ldots, W_{\tau_K}))_{t \in [0,1]}$
 are the content of Section \ref{section_2}. The proofs are postponed to Section \ref{section_3}. Finally, in Section \ref{section_4} we study more irregular cases with optimal rates below $n^{-1/2}$.

\section{Preliminaries}\label{section_1}

Here, we give a short introduction to the Skorohod integral. An essential tool is the reformulation of processes via the Wiener chaos decomposition. Aiming the optimal approximation, we collect some basic properties of conditional expectations and Skorohod integrals and present a simple It\^o formula for Skorohod integrals. 

We restrict ourselves to the stochastic calculus on the Gaussian Hilbert space $\{I(f) : f \in L^2([0,1])\} \subset L^2(\Omega)$, where $I(f)$ denotes the Wiener integral. We denote the norm and inner product on $L^2([0,1])$ by $\|\cdot \|$ and $\langle \cdot, \cdot\rangle$. Due to the totality of the stochastic exponentials
\begin{equation*}\label{eq:WickExponential}
\exp\left(I(f) - \|f\|^2/2\right)\ , \quad f \in L^2([0,1]), 
\end{equation*}
in $L^2(\Omega)$, (see e.g. \cite[Corollary 3.40]{Janson}), for every $X \in L^2(\Omega, \mathcal{F}, P)$ and $h \in L^2([0,1])$, the \emph{S-transform} of $X$ at $h$ is defined as
\[
(SX)(h) := \ex[X \exp\left(I(f) - \|f\|^2/2\right)]. 
\]
The S-transform $(S \cdot)(\cdot)$ is a continuous and injective function on $L^2(\Omega, \mathcal{F}, P)$ (see e.g. \cite[Chapter 16]{Janson} for more details). As an example, for $f,g \in L^2([0,1])$, we have $(S \ \exp\left(I(f) - \|f\|^2/2\right))(g) = \exp\left(\langle f,g\rangle\right)$. In particular the characterization of random variables via the S-transform can be used to introduce the Skorohod integral, an extension of the It\^{o} integral to nonadapted integrands (cf. e.g. \cite[Section 16.4]{Janson}): 
\begin{definition}
Suppose $u=(u_t)_{t \in [0,1]} \in L^2(\Omega \times [0,1])$ is a (possibly nonadapted) square integrable process on $(\Omega, \mathcal{F}, P)$ and $Y \in L^2(\Omega, \mathcal{F}, P)$ such that
\[
\forall g \in L^2([0,1]): \ (S Y)(g) = \int_{0}^{1} (S u_s)(g) g(s) ds, 
\]
then $\int_{0}^{1}u_s dW_{s} = Y$ defines the Skorohod integral of $u$ with respect to the Brownian motion $W$. 
\end{definition}
For more information on the Skorohod integral we refer to \cite{Janson}, \cite{Kuo} or \cite{Nualart}. 
We recall that for the Hermite polynomials
\begin{equation*} 
h^{k}_{\alpha}(x) = (-\alpha)^{k} \exp\left(\frac{x^2}{2\alpha}\right) \frac{d^k}{dx^k} \exp\left(\frac{- x^2}{2\alpha}\right) 
\end{equation*} 
and every $k \in \N$, the $k$-th Wiener chaos $H^{:k:}$ is the $L^2$-completion of $\{h^k_{\| f \|^2}(I(f)) : f \in L^2([0,1])\}$ in $L^2(\Omega)$ and these subspaces are orthogonal and fulfill $L^2(\Omega, \mathcal{F}, P) = \bigoplus_{k \geq 0} H^{:k:}$. Thus, for the projections 
\[
\pi_k: L^2(\Omega) \rightarrow H^{:k:},
\]
for every random variable $X \in L^2(\Omega)$, we denote the \emph{Wiener chaos decomposition} as
\[
X=\sum_{k=0}^{\infty} \pi_k(X).
\]
We refer to \cite{Janson, Holden_Buch} for further details and a reformulation in terms of multiple Wiener integrals. We recall that a process $u \in L^2(\Omega \times [0,1])$ is Skorohod integrable if and only if
\[
\sum_{k=0}^{\infty} (k+1) \|\pi_k(u)\|^2_{L^2(\Omega \times [0,1])} < \infty,
\]
(cf. \cite[Theorem 7.39]{Janson}).
The S-transform is closely related to a product imitating uncorrelated random variables as $\ex[X \diamond Y]=\ex[X]\ex[Y]$, which is implicitly contained in the Skorohod integral and a fundamental tool in stochastic analysis. Due to the injectivity of the S-transform, the \emph{Wick product} can be introduced via
\[
\forall g \in L^2([0,1]) \ : \ S(X \diamond Y)(g) = (S X)(g) (S Y)(g)
\]
on a dense subset in $L^{2}(\Omega) \times L^{2}(\Omega)$. For more details on Wick product we refer to \cite{Janson, Holden_Buch, Kuo}. For example, for a Gaussian random variable $X$ and all $k \in \N$, we see that Hermite polynomials play the role of monomials in standard calculus as
\[
X^{\diamond k} = h^k_{\ex[X^2]}(X).
\]

Following \cite{Holden_Buch, Buckdahn_Nualart}, we denote the Wiener chaos decomposition in terms of Wick products as the \emph{Wick-analytic representation}. In particular, for fixed $t_1, \ldots, t_K \in [0,1]$, $f: \R^K \rightarrow \R$ and a square integrable left hand side, there exist $a_{l_1, \ldots, l_K} \in \R, l_1, \ldots, l_K\geq 0$, such that
\begin{equation}\label{eq:WienerChaosByWickProducts}
f(W_{t_1}, \ldots, W_{t_K}) = \sum_{l_1, \ldots, l_K\geq 0} a_{l_1, \ldots, l_K} W_{t_1}^{\diamond l_1} \diamond \cdots \diamond W_{t_K}^{\diamond l_K}.
\end{equation}  

For a proof of the following reformulations we refer to \cite[Chapter 16]{Janson} or \cite[Proposition 7]{NP}:

\begin{proposition}\label{prop:SkorohodAndWick}
Suppose $X \in L^2(\Omega)$ and a Skorohod integrable process  $u\in L^2(\Omega \times [0,1])$. Then, if both sides exist in $L^2(\Omega)$:
\[
\int_{0}^{1} X \diamond u_s dW_s = X \diamond \int_{0}^{1} u_s dW_s, \qquad  \int_{0}^{1} X \diamond u_s ds = X \diamond \int_{0}^{1} u_s ds.
\]
\end{proposition}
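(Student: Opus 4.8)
The plan is to verify both identities by applying the $S$-transform and using its injectivity on $L^2(\Omega)$. For the first identity, fix $g \in L^2([0,1])$. On the one hand, by the defining property of the Skorohod integral applied to the process $(X \diamond u_s)_{s \in [0,1]}$, we have
\begin{align*}
\left(S \int_0^1 X \diamond u_s\, dW_s\right)(g) = \int_0^1 \bigl(S(X \diamond u_s)\bigr)(g)\, g(s)\, ds.
\end{align*}
By the definition of the Wick product, $\bigl(S(X \diamond u_s)\bigr)(g) = (SX)(g)\,(Su_s)(g)$, so the right-hand side equals $(SX)(g) \int_0^1 (Su_s)(g)\, g(s)\, ds = (SX)(g)\bigl(S\int_0^1 u_s\, dW_s\bigr)(g)$, where the last step uses the definition of the Skorohod integral of $u$. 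On the other hand, applying the definition of the Wick product once more gives $\bigl(S(X \diamond \int_0^1 u_s\, dW_s)\bigr)(g) = (SX)(g)\bigl(S\int_0^1 u_s\, dW_s\bigr)(g)$. The two expressions agree for every $g$, so by injectivity of the $S$-transform the two random variables coincide. The second identity is handled identically, replacing the weight $g(s)\,ds$ by $ds$: one checks that $\bigl(S\int_0^1 X \diamond u_s\, ds\bigr)(g) = (SX)(g)\int_0^1 (Su_s)(g)\, ds = (SX)(g)\bigl(S\int_0^1 u_s\, ds\bigr)(g) = \bigl(S(X \diamond \int_0^1 u_s\, ds)\bigr)(g)$.

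The one point that genuinely needs care — and which I expect to be the main obstacle — is the interchange of the $S$-transform (an expectation against a stochastic exponential) with the Lebesgue integral over $s \in [0,1]$, i.e. justifying $\ex\bigl[(\int_0^1 X \diamond u_s\, ds)\, \exp(I(g) - \|g\|^2/2)\bigr] = \int_0^1 \ex[(X \diamond u_s)\exp(I(g)-\|g\|^2/2)]\, ds$, and likewise for the $dW_s$ version via the definition of the Skorohod integral. For this I would invoke Fubini's theorem, which applies once we know $(s,\omega) \mapsto (X \diamond u_s)(\omega)$ is jointly measurable and the relevant integrability holds; the hypothesis that $X \diamond u \in L^2(\Omega \times [0,1])$ (implicit in "both sides exist in $L^2(\Omega)$", since the Skorohod integral on the left is only defined for such processes) together with Cauchy–Schwarz against the square-integrable exponential gives the needed absolute integrability. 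A second, more bookkeeping-type subtlety is that the Wick product $X \diamond u_s$ is a priori only defined on a dense subset of $L^2(\Omega) \times L^2(\Omega)$, so strictly one should either assume $X$ and $u_s$ lie in a class where $\diamond$ is defined pointwise in $s$ with the stated $S$-transform factorization, or pass through an approximation argument; since the statement already presupposes that all four expressions exist in $L^2(\Omega)$, it is cleanest to take the $S$-transform factorization of $X \diamond u_s$ as given for a.e. $s$ and simply run the computation above.

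I would present the first identity in full and remark that the second is proved "in the same way, replacing $g(s)\,ds$ by $ds$ throughout," since no new idea is involved. The whole argument is short: the substance is entirely in the definitions of the $S$-transform, the Wick product, and the Skorohod integral, plus injectivity of $S$ and one application of Fubini.
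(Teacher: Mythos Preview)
Your proposal is correct and follows exactly the approach the paper intends: the paper does not give its own proof but refers to \cite[Chapter 16]{Janson} and \cite[Proposition 7]{NP}, where the argument is precisely the $S$-transform computation you outline, using the factorization of $S(X\diamond u_s)$ and the defining relation for the Skorohod integral together with injectivity of $S$. Your identification of the Fubini step and the domain issue for $\diamond$ is accurate, and both are handled by the standing hypothesis that all objects lie in $L^2$.
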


Suppose $f_1, \ldots, f_K \in L^2([0,1])$ and $l_1,\ldots, l_K \in \N$. Then the polynomial
\[
h(x_1,\ldots, x_K) =  \dfrac{\partial^{\sum l_i}}{\partial a_1^{l_1} \cdots \partial a_K^{l_K}} \left.\exp\left(\sum a_i x_i - \|\sum a_i f_i\|^2/2\right)\right|_{a_1=\ldots =a_K=0}, 
\]
(where we omit the dependence on $f_i$ and $l_i$ for shortance) gives the analytic representation
\begin{equation}\label{eq:WickProductByWickExponential}
h(I(f_1),\ldots, I(f_K)) = I(f_1)^{\diamond l_1} \diamond \cdots \diamond I(f_K)^{\diamond l_K},
\end{equation}
(cf. \cite[Chapter 3]{Janson}). For more details on these multivariate generalizations of Hermite polynomials see e.g. \cite{Avram_Taqqu} or \cite{P2}. Dealing with $L^2$-norms of Gaussian random variables, we will frequently make use of
\begin{equation}\label{eq:NormOFWickProductGaussian}
\ex\left[\left(I(f_1) \diamond \cdots \diamond I(f_n)\right) \left(I(g_1) \diamond \cdots \diamond I(g_m)\right)\right] = \delta_{n,m} \sum\limits_{\sigma \in \mathcal{S}_{n}} \prod\limits_{i=1}^{n} \langle f_i, g_{\sigma(i)}\rangle,
\end{equation}
for all $n,m \in \N$, $f_1,\ldots, f_n, g_1,\ldots, g_m \in L^2([0,1])$, where $\mathcal{S}_{n}$ denotes the group of permutations on $\{1, \ldots, n\}$ (see e.g. \cite[Theorem 3.9]{Janson}). In particular
\begin{equation}\label{eq:L2NormWickProductInequality}
\ex\left[\left(I(f_1)^{\diamond l_1} \diamond \cdots I(f_n)^{\diamond l_n}\right)^2\right] \leq n! \prod_{i=1}^n l_i! \|f_i\|^{2 l_i}.
\end{equation}
We denote the equidistant discretization of the underlying Brownian motion by
$$
\P_n:= \{W_{\frac{1}{n}}, W_{\frac{2}{n}}, \ldots, W_1\}
$$ 
and the corresponding linear interpolation of the Brownian motion as
$$
W_t^{\lin} := W_{i/n} + n(t-i/n)(W_{(i+1)/n}-W_{i/n}),
$$
for $t \in [i/n, (i+1)/n)$, $i =0,\ldots, n-1$. Obviously, we have $W_t^{\lin} = \ex[W_t|\P_n]$. The advantage of the Wick product is preserving the conditional expectation (see \cite[Corollary 9.4]{Janson}):

\begin{proposition}\label{prop:WickConditionalExpectation}
For $X,Y, X\diamond Y \in L^2(\Omega)$ and the sub-$\sigma$-field $\mathcal{G} \subseteq \mathcal{F}$:
$$
\ex[X\diamond Y|\mathcal{G}] = \ex[X|\mathcal{G}]\diamond \ex[Y|\mathcal{G}].
$$ 
\end{proposition}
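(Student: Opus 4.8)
The plan is to run everything through the S-transform, using that it is injective on $L^2(\Omega,\mathcal{F},P)$ and linearises the Wick product, $S(X\diamond Y)(g)=(SX)(g)\,(SY)(g)$. Since in all applications $\mathcal{G}$ is generated by $\P_n=\{W_{1/n},\ldots,W_1\}$, I would work under the (necessary) hypothesis that $\mathcal{G}=\sigma(I(h):h\in V)$ for a closed subspace $V\subseteq L^2([0,1])$ — here $V=\lin\{\eins_{[0,i/n]}:1\le i\le n\}$ — and write $P_V$ for the orthogonal projection onto $V$.

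The first step is the conditioning rule
\[
(S\,\ex[X|\mathcal{G}])(g)=(SX)(P_Vg),\qquad g\in L^2([0,1]),\ X\in L^2(\Omega).
\]
For a stochastic exponential $X=\exp(I(f)-\|f\|^2/2)$ one splits $f=P_Vf+(f-P_Vf)$: then $I(P_Vf)$ is $\mathcal{G}$-measurable while $I(f-P_Vf)$ is a centred Gaussian variable independent of $\mathcal{G}$, so $\ex[X|\mathcal{G}]=\exp(I(P_Vf)-\|P_Vf\|^2/2)$ after using $\|f\|^2=\|P_Vf\|^2+\|f-P_Vf\|^2$, and therefore $(S\,\ex[X|\mathcal{G}])(g)=\exp(\langle P_Vf,g\rangle)=\exp(\langle f,P_Vg\rangle)=(SX)(P_Vg)$. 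I would extend this to all $X\in L^2(\Omega)$ by linearity on the dense span of the stochastic exponentials, using that $X\mapsto\ex[X|\mathcal{G}]$ is an $L^2$-contraction and that $X\mapsto(SX)(g)$ is continuous.

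Granted the rule, for $X,Y$ with $X\diamond Y\in L^2(\Omega)$ and every $g\in L^2([0,1])$,
\[
(S\,\ex[X\diamond Y|\mathcal{G}])(g)=S(X\diamond Y)(P_Vg)=(SX)(P_Vg)\,(SY)(P_Vg)=(S\,\ex[X|\mathcal{G}])(g)\,(S\,\ex[Y|\mathcal{G}])(g),
\]
and the right-hand side is precisely the S-transform of $\ex[X|\mathcal{G}]\diamond\ex[Y|\mathcal{G}]$. Since the left-hand side is the S-transform of an element of $L^2(\Omega)$, injectivity of the S-transform then gives $\ex[X\diamond Y|\mathcal{G}]=\ex[X|\mathcal{G}]\diamond\ex[Y|\mathcal{G}]$ — provided the Wick product on the right is first shown to be square integrable.

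That proviso is the real obstacle, since the Wick product is discontinuous on $L^2(\Omega)$, so one cannot simply approximate $X$ and $Y$ by polynomial integrands and pass to the limit. I would resolve it at the level of the Wiener chaos, where $\ex[\cdot|\mathcal{G}]$ preserves the grading and restricts on $H^{:n:}$ to the projection induced by $P_V^{\otimes n}$, while the Wick product $H^{:j:}\times H^{:k:}\to H^{:j+k:}$ is the bounded symmetrised tensor product and $P_V^{\otimes(j+k)}=P_V^{\otimes j}\otimes P_V^{\otimes k}$ commutes with symmetrisation. Comparing chaos components of $Z:=X\diamond Y$ yields $\pi_m(\ex[Z|\mathcal{G}])=\sum_{j+k=m}\ex[\pi_jX|\mathcal{G}]\diamond\ex[\pi_kY|\mathcal{G}]$ (finite sums, all well defined), which are exactly the formal chaos components of $\ex[X|\mathcal{G}]\diamond\ex[Y|\mathcal{G}]$; their square-summability is guaranteed by $\sum_m\|\pi_m(\ex[Z|\mathcal{G}])\|^2=\|\ex[Z|\mathcal{G}]\|^2<\infty$, which forces $\ex[X|\mathcal{G}]\diamond\ex[Y|\mathcal{G}]\in L^2(\Omega)$ and equal to $\ex[Z|\mathcal{G}]$. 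The computations on exponentials and the reduction to Gaussian $\mathcal{G}$ are routine; this chaos-level identification of $\ex[\cdot|\mathcal{G}]$ and $\diamond$ with tensor operations is the substance behind \cite[Corollary~9.4]{Janson}.
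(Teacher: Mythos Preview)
The paper does not prove this proposition; it simply cites \cite[Corollary~9.4]{Janson}. Your argument is correct and is essentially a reconstruction of the proof behind Janson's result: the conditioning rule $(S\,\ex[X|\mathcal{G}])(g)=(SX)(P_Vg)$ for Gaussian $\mathcal{G}$, followed by the chaos-level identification that simultaneously establishes $\ex[X|\mathcal{G}]\diamond\ex[Y|\mathcal{G}]\in L^2(\Omega)$ and the equality. You are also right to insist on $\mathcal{G}=\sigma(I(h):h\in V)$ for a closed subspace $V$ --- the identity can fail for non-Gaussian sub-$\sigma$-fields of $\mathcal{F}$, and the paper's formulation is tacitly relying on the fact that only $\mathcal{G}=\sigma(\P_n)$ is ever used, which is of this form.
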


\begin{definition}\label{def_Wick_analytic_functionals}
We define the class of \emph{Wick-analytic functionals} as 
\begin{equation}\label{eq:WickAnalyticFunctional}
F^{\diamond}(W_{t_1}, \ldots, W_{t_K}) = \sum\limits_{k=0}^{\infty}a_{1,k} W_{t_1}^{\diamond k} \diamond \cdots \diamond \sum\limits_{k=0}^{\infty} a_{K,k} W_{t_K}^{\diamond k}, \ \  \max_{i \leq K}\sup\limits_{k} \sqrt[k]{k! |a_{i,k}|}  < \infty. 
\end{equation}
\end{definition}
In particular, the analytic representation 
$$
f^{\diamond}(t_1,\ldots, t_K, W_{t_1}, \ldots, W_{t_K})
$$
of a Wick-analytic functional \eqref{eq:WickAnalyticFunctional} via \eqref{eq:WickProductByWickExponential} fulfills $f^{\diamond} \in C^{\infty}([0,1]^K\times \R^{K},\R)$ and all derivatives of Wick-analytic functionals are Wick-analytic as well (cf. \cite[Proposition 10]{NP}).

Combining the previous propositions, we obtained the following reformulation (Remark 11 in \cite{NP}):

\begin{proposition}\label{prop:AnalyticRepresentation}
Suppose $f \in C^{1;2,\ldots, 2}([0,1]\times \R^{K})$ and fixed  $\tau_2, \ldots, \tau_K \in [0,1]$. Then, via \eqref{eq:WienerChaosByWickProducts} and \eqref{eq:WickProductByWickExponential}, the analytic representation
\[
f^{\diamond}(t, \tau_2, \ldots, \tau_K,W_{t}, W_{\tau_2},\ldots, W_{\tau_K}) = f(t,W_{t}, W_{\tau_2},\ldots, W_{\tau_K})
\]
satisfies $f^{\diamond} \in C^{1,\ldots, 1; 2,\ldots, 2}([0,1]^K\times \R^{K})$. 
\end{proposition}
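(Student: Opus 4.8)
The plan is to build the claimed analytic representation $f^{\diamond}$ explicitly from the Wick-analytic building blocks and then verify the joint smoothness by differentiating under the (locally convergent) series. Starting from a function $f \in C^{1;2,\ldots,2}([0,1]\times\R^K)$, I would first apply the Wick-analytic representation \eqref{eq:WienerChaosByWickProducts} to write $f(t,W_t,W_{\tau_2},\dots,W_{\tau_K})$, for each fixed $t$, as a series $\sum_{l_0,l_2,\dots,l_K\ge 0} a_{l_0,l_2,\dots,l_K}(t)\, W_t^{\diamond l_0}\diamond W_{\tau_2}^{\diamond l_2}\diamond\cdots\diamond W_{\tau_K}^{\diamond l_K}$, where the coefficients $a(t)$ depend on $t$ only through the variance $t$ of $W_t$ (and hence are smooth in $t$). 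Then, using \eqref{eq:WickProductByWickExponential} with $f_i$ the indicator functions $\eins_{[0,t]},\eins_{[0,\tau_2]},\dots,\eins_{[0,\tau_K]}$, each Wick monomial $W_t^{\diamond l_0}\diamond\cdots\diamond W_{\tau_K}^{\diamond l_K}$ equals a multivariate Hermite-type polynomial $h_{l_0,\dots,l_K}(t,\tau_2,\dots,\tau_K; W_t,\dots,W_{\tau_K})$ whose coefficients are polynomials in the pairwise inner products $\langle\eins_{[0,s]},\eins_{[0,s']}\rangle = s\wedge s'$. Substituting these deterministic arguments gives a candidate $f^{\diamond}(t_1,\dots,t_K,x_1,\dots,x_K)$ which, by construction, reduces to $f(t,W_t,\dots)$ when $(t_1,\dots,t_K)=(t,\tau_2,\dots,\tau_K)$ and $(x_1,\dots,x_K)=(W_t,\dots,W_{\tau_K})$.

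Next I would establish the regularity $f^{\diamond}\in C^{1,\dots,1;2,\dots,2}([0,1]^K\times\R^K)$. In the chaos variables $x_1,\dots,x_K$ the statement follows because each $h_{l_0,\dots,l_K}$ is a genuine polynomial in $x_1,\dots,x_K$, so only locally uniform convergence of the differentiated series in the $x$-variables is needed; this comes from the coefficient growth inherited from $f\in C^{\cdot;2,\dots,2}$ together with the Cameron–Martin/Hermite expansion estimates, exactly as in the one-dimensional analogue recorded in \cite[Proposition 10, Remark 11]{NP}. In the time variables $t_1,\dots,t_K$ one differentiates the inner-product polynomials $s\wedge s'$, which are Lipschitz (hence $C^0$ with a.e. bounded derivative, and genuinely $C^1$ in the single variable $t_1$ that appears only as $t_1\wedge t_1 = t_1$ and $t_1\wedge\tau_j$), yielding the asymmetric $C^1$-in-time regularity; since the off-diagonal entries $\tau_i\wedge\tau_j$ are fixed constants in the final substitution, the only time variable that must be handled with a derivative is $t_1$, which matches the $C^1$ in the first slot of the claimed regularity class. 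Here I would lean on Proposition~\ref{prop:SkorohodAndWick} and Proposition~\ref{prop:WickConditionalExpectation} only insofar as they are already folded into the cited reformulation; the substantive input is \eqref{eq:L2NormWickProductInequality} to control tails of the rearranged series.

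The main obstacle I anticipate is the bookkeeping needed to pass from the abstract chaos decomposition to the \emph{pointwise} polynomial identity \eqref{eq:WickProductByWickExponential} with the correct assignment of the $f_i$'s: one must check that replacing $I(f_i)$ by the deterministic variable $x_i$ is legitimate, i.e. that the series defining $f^{\diamond}$ converges for all real arguments (not just at Gaussian values), and that term-by-term differentiation in both the $x$- and $t$-blocks is justified by locally uniform convergence. This is precisely the content of the Wick-analytic calculus developed around \eqref{eq:WickAnalyticFunctional}; granting that machinery, the proof is a direct assembly. I would therefore present the argument as: (1) expand via \eqref{eq:WienerChaosByWickProducts}; (2) convert each Wick monomial via \eqref{eq:WickProductByWickExponential}; (3) read off $f^{\diamond}$ and invoke the convergence estimates to differentiate, obtaining $C^{1,\dots,1;2,\dots,2}$; and (4) note that evaluation at the Gaussian point recovers $f$ by injectivity of the $S$-transform together with the matching of $S$-transforms on stochastic exponentials. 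This mirrors \cite[Remark 11]{NP} and requires no new ideas beyond careful tracking of which variables are free versus substituted.
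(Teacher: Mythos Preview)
The paper does not supply a proof of this proposition: it simply records the statement as a reformulation obtained ``combining the previous propositions'' and cites \cite[Remark 11]{NP}. Your proposal is consistent with that citation---you expand via \eqref{eq:WienerChaosByWickProducts}, convert each Wick monomial through \eqref{eq:WickProductByWickExponential}, and then invoke the convergence estimates from \cite{NP}---so there is nothing to contrast; both you and the paper defer the substance to the same external reference.

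One caveat worth flagging in your write-up: the growth condition in Definition~\ref{def_Wick_analytic_functionals} is what guarantees pointwise convergence (and termwise differentiation) of the series at arbitrary real arguments, whereas the hypothesis here is only $f\in C^{1;2,\ldots,2}$, under which the chaos expansion is a priori only an $L^2(\Omega)$-identity. You correctly name this as ``the main obstacle'', but your resolution (``granting that machinery'') leans on \eqref{eq:WickAnalyticFunctional}, which is a strictly stronger assumption than the one stated. The actual bridge---how Remark~11 in \cite{NP} extracts a $C^{1,\ldots,1;2,\ldots,2}$ function $f^{\diamond}$ from a merely $C^{1;2,\ldots,2}$ input without the Wick-analytic growth bound---is not spelled out in either the paper or your proposal, so if you intend to make the argument self-contained you would need to supply that step rather than assume it.
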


This yields a simple Skorohod It\^o formula (\cite[Theorem 15]{NP}):

\begin{theorem}\label{theorem:SkorohodItoFormula}
Suppose $K \in \N$, $f \in C^{1;2,\ldots, 2}([0,1]\times \R^{K})$ and fixed  $\tau_2, \ldots, \tau_K \in [0,1]$. Then $\left(\dfrac{\partial}{\partial x_1}f(t,W_{t}, W_{\tau_2},\ldots, W_{\tau_K})\right)_{t \in [0,1]}$ is Skorohod integrable and 
\begin{align}
&f(1,W_{1}, W_{\tau_2},\ldots, W_{\tau_K}) - f(0,W_{0}, W_{\tau_2},\ldots, W_{\tau_K})\nonumber\\ &=\int_{0}^{1}\dfrac{\partial}{\partial x_1} f^{\diamond}(t, \tau_2\ldots, \tau_K,W_{t}, \ldots, W_{\tau_K}) dW_{t} + \int_{0}^{1} \mathcal{L} f^{\diamond}(t, \tau_2\ldots, \tau_K,W_{t}, \ldots, W_{\tau_K})dt,\label{eq_thm_Ito_formula_Skorohod}
\end{align}
with $f^{\diamond} \in C^{1,\ldots, 1; 2,\ldots, 2}([0,1]^K\times \R^{K})$ from Proposition \ref{prop:AnalyticRepresentation} and the differential operator $\mathcal{L}$ is defined as
\begin{equation}\label{eq_L_operator}
\mathcal{L}:= \left(\sum_{1\leq k \leq K}\dfrac{\partial}{\partial t_k} + \frac{1}{2} \sum_{1\leq k,l \leq K} \dfrac{\partial^2}{\partial x_k \partial x_l}\right).  
\end{equation}
\end{theorem}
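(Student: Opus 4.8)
\noindent\textbf{Proof plan (following \cite{NP}).}
The plan is to expand the integrand in Wick products, apply the ordinary It\^o formula to each Hermite--martingale factor, and reassemble using the Wick/Skorohod commutation of Proposition~\ref{prop:SkorohodAndWick}. By Proposition~\ref{prop:AnalyticRepresentation}, $f^\diamond\in C^{1,\dots,1;2,\dots,2}([0,1]^K\times\R^K)$ exists and, for every $t$,
\[
f(t,W_t,W_{\tau_2},\dots,W_{\tau_K})=\sum_{\vec l\ge 0}a_{\vec l}(t,\tau_2,\dots,\tau_K)\,W_t^{\diamond l_1}\diamond W_{\tau_2}^{\diamond l_2}\diamond\cdots\diamond W_{\tau_K}^{\diamond l_K},
\]
where by \eqref{eq:WickProductByWickExponential} the Wick monomial equals $h_{\vec l}^{(t,\vec\tau)}(W_t,\dots,W_{\tau_K})$ for the generalized Hermite polynomial $h_{\vec l}^{(t,\vec\tau)}$ attached to $\eins_{[0,t]},\eins_{[0,\tau_2]},\dots,\eins_{[0,\tau_K]}$, and $f^\diamond(t,\vec\tau,\cdot)=\sum_{\vec l}a_{\vec l}(t,\vec\tau)\,h_{\vec l}^{(t,\vec\tau)}(\cdot)$. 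I would run the computation first on finite truncations of this sum (so each summand is an honest smooth function of $(t,x_1,\dots,x_K)$ with deterministic $t$-dependent coefficients) and then recover the general case by approximation, controlling remainders through the chaos criterion $\sum_k(k+1)\|\pi_k(\cdot)\|^2_{L^2(\Omega\times[0,1])}<\infty$ and the norm bound \eqref{eq:L2NormWickProductInequality}.

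For a single summand, write $X_{\vec l'}:=W_{\tau_2}^{\diamond l_2}\diamond\cdots\diamond W_{\tau_K}^{\diamond l_K}\in L^2(\Omega)$, which is fixed in $t$. The key observation is that $W_t^{\diamond l_1}=h^{l_1}_t(W_t)$ is a square-integrable martingale with $d(W_t^{\diamond l_1})=l_1 W_t^{\diamond(l_1-1)}\,dW_t$ and \emph{no} $dt$-term, because $h^{l_1}_t$ solves $\partial_t h^{l_1}_t+\tfrac12\partial_x^2 h^{l_1}_t=0$ and $\partial_x h^{l_1}_t=l_1 h^{l_1-1}_t$; this is the ordinary It\^o formula. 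Combining it with deterministic integration by parts against the smooth scalar $t\mapsto a_{\vec l}(t,\vec\tau)$, and then moving the fixed Wick factor $X_{\vec l'}$ across both the stochastic and the Lebesgue integral by Proposition~\ref{prop:SkorohodAndWick}, one gets
\begin{align*}
a_{\vec l}(1,\vec\tau)\,W_1^{\diamond l_1}\diamond X_{\vec l'}-a_{\vec l}(0,\vec\tau)\,W_0^{\diamond l_1}\diamond X_{\vec l'}
&=\int_0^1 l_1\,a_{\vec l}(t,\vec\tau)\,W_t^{\diamond(l_1-1)}\diamond X_{\vec l'}\,dW_t\\
&\quad+\int_0^1 \partial_t a_{\vec l}(t,\vec\tau)\,W_t^{\diamond l_1}\diamond X_{\vec l'}\,dt .
\end{align*}
Summing over $\vec l$, the left-hand side sums to $f(1,W_1,W_{\tau_2},\dots)-f(0,W_0,W_{\tau_2},\dots)$.

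It remains to recognize the two series on the right. Differentiating the generating function in \eqref{eq:WickProductByWickExponential} in the space variables yields $\partial_{x_1}h_{\vec l}^{(t,\vec\tau)}=l_1\,h^{(t,\vec\tau)}_{(l_1-1,l_2,\dots,l_K)}$, so $\sum_{\vec l}l_1 a_{\vec l}\,W_t^{\diamond(l_1-1)}\diamond X_{\vec l'}=\partial_{x_1}f^\diamond(t,\vec\tau,W_t,\dots,W_{\tau_K})$, which in particular shows this process is Skorohod integrable. Differentiating the same generating function in the time variables, noting that $\partial_{t_k}\langle\eins_{[0,t_i]},\eins_{[0,t_j]}\rangle=\partial_{t_k}\min(t_i,t_j)$ matches exactly the second-order term of the diffusion, gives the space--time heat identity $\big(\sum_{1\le k\le K}\partial_{t_k}+\tfrac12\sum_{1\le k,l\le K}\partial_{x_k}\partial_{x_l}\big)h_{\vec l}^{(\vec t)}=0$; hence each $h_{\vec l}^{(\vec t)}$ is $\mathcal L$-harmonic and $\mathcal L\big(a_{\vec l}\,h_{\vec l}^{(\vec t)}\big)=\big(\sum_k\partial_{t_k}a_{\vec l}\big)\,h_{\vec l}^{(\vec t)}$. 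Evaluating at the relevant argument and specializing the first time variable to the moving time $t$ (the others frozen at $\tau_2,\dots,\tau_K$), the $dt$-integrand is then identified with $\mathcal L f^\diamond(t,\vec\tau,W_t,\dots,W_{\tau_K})$ from \eqref{eq_thm_Ito_formula_Skorohod}. Finally, for general $f\in C^{1;2,\dots,2}$ one approximates $f$ together with $\partial_t f$ and its first and second space-derivatives (uniformly on compacta, with Gaussian-integrable tail control) by such finite-sum functions, and passes to the limit in $L^2(\Omega)$ and $L^2(\Omega\times[0,1])$, using closability of the Skorohod operator and Proposition~\ref{prop:AnalyticRepresentation} to identify the limits with $f^\diamond$, $\partial_{x_1}f^\diamond$, $\mathcal L f^\diamond$.

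The step I expect to be the main obstacle is this last identification. The covariances $\langle\eins_{[0,t]},\eins_{[0,\tau_j]}\rangle=\min(t,\tau_j)$ are only Lipschitz in $t$, so they sit inside the coefficients $a_{\vec l}(t,\vec\tau)$ and the polynomials $h_{\vec l}^{(t,\vec\tau)}$ with corners at $t=\tau_j$; one must check both that the resulting drift stays absolutely continuous across these corners and that the heat-identity bookkeeping, once specialized to $t_1=t$ with $t_2,\dots,t_K$ frozen, collapses precisely onto the operator $\mathcal L$ of \eqref{eq_L_operator} applied to $f^\diamond$. The second, more routine, obstacle is to justify interchanging the infinite $\vec l$-sum with the Skorohod integral and with the approximation limit, which is where the growth condition in \eqref{eq:WickAnalyticFunctional}, the bound \eqref{eq:L2NormWickProductInequality}, and the stability of the Wick-analytic class under differentiation are used, and which is what ties the result to integrands of the form \eqref{eq:integrandIntro}.
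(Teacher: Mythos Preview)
Your plan is correct and matches the route the paper indicates. The paper does not spell out a proof here; it cites \cite[Theorem 15]{NP} and simply records that the formula ``is mainly due to'' the Wick integration-by-parts identity \eqref{eq:IBPWick}, which it says ``can be checked by S-transform.'' Your proof reproduces exactly \eqref{eq:IBPWick} for each Wick monomial, then sums and identifies the two integrands with $\partial_{x_1}f^\diamond$ and $\mathcal L f^\diamond$ via the $\mathcal L$-harmonicity of the generalized Hermite polynomials --- precisely the mechanism behind \eqref{eq:IBPWick}.

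The only noteworthy difference is how you verify \eqref{eq:IBPWick}: you argue via the ordinary It\^o formula applied to the Hermite martingale $t\mapsto W_t^{\diamond l_1}=h^{l_1}_t(W_t)$ together with Proposition~\ref{prop:SkorohodAndWick}, whereas the paper suggests checking it directly through the S-transform. Both verifications are short and equivalent; the S-transform route is slightly cleaner because it bypasses the need to invoke adaptedness of $W_t^{\diamond l_1}$ before Wick-multiplying by the nonadapted factor, but your route makes the link to classical It\^o calculus more transparent. Your anticipated ``main obstacle'' about corners at $t=\tau_j$ is not an issue for \eqref{eq_thm_Ito_formula_Skorohod}: the identity is an equality of $L^2$ random variables after integrating in $t$, and Proposition~\ref{prop:AnalyticRepresentation} already guarantees the required joint regularity of $f^\diamond$.
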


It is mainly due to the following integration by parts formula which can be checked by S-transform and will be useful below:

\begin{align}
&a_{l_1, \ldots, l_K}(1) W_1^{\diamond l_1} \diamond \cdots \diamond W_{\tau_K}^{\diamond l_K} - a_{l_1, \ldots, l_K}(0) W_0^{\diamond l_1} \diamond \cdots \diamond W_{\tau_K}^{\diamond l_K}\nonumber\\
&= \int_{0}^{1} l_1a_{l_1, \ldots, l_K}(t) W_{t}^{\diamond l_1-1} \diamond \cdots \diamond W_{\tau_K}^{\diamond l_K} dW_{t}  + \int_{0}^{1} \left( a_{l_1, \ldots, l_K}'(t)\right) W_{t}^{\diamond l_1} \diamond \cdots \diamond W_{\tau_K}^{\diamond l_K} dt,\label{eq:IBPWick}
\end{align}
where $a_{l_1, \ldots, l_K}(\cdot)\in C^1([0,1])$ for all $l_1, \ldots, l_K \geq 0$.

\section{The main result}\label{section_2}

In \cite{NP} we specified some sufficient and equivalent assumptions for Skorohod integrands to ensure exact simulation, i.e. $e_n=0$. Among them we had (Theorem 17):
\begin{enumerate}
 \item There exists a Wick-analytic representation of $(u_t)_{t \in [0,1]}$ as in Definition \ref{def_Wick_analytic_functionals}.
\item The analytic representation fulfills $f^{\diamond} \in C^{1,\ldots, 1; 2,\ldots, 2}([0,1]^K\times \R^{K})$ and $\mathcal{L} f^{\diamond} =0$ on $[0,1]^K \times \R^{K}$.
\end{enumerate}

This motivated the following optimal approximation result which covers the ordinary It\^o case (\cite[Theorem 21]{NP}):

\begin{theorem}\label{theorem:OptimalApproxSkorohod}
Suppose $K\in \N$, $f^{\diamond} \in C^{1,\ldots, 1; 2,\ldots, 2}([0,1]^K\times \R^{K})$ with
some linear and H\"older growth conditions on $\mathcal{L} f^{\diamond}$, $\tau_2, \ldots, \tau_K \in [0,1]$ are fixed and let
\[
c_1:= \frac{1}{\sqrt{12}} \left(\int_{0}^{1} \ex[\mathcal{L} f^{\diamond}(s, \tau_2,\ldots, \tau_K,W_s,W_{\tau_2}, \ldots, W_{\tau_K})^2] ds\right)^{1/2}. 
\]
Then it is
\begin{equation*}
\lim\limits_{n \rightarrow \infty} n \cdot \ex\left[\left(\int_{0}^{1} u_s dW_s-\ex[\int_{0}^{1} u_s dW_s|\P_n \cup \{W_{\tau_2},\ldots, W_{\tau_K}\}]\right)^2\right]^{1/2} = c_1.
\end{equation*}
\end{theorem}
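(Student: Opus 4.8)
\textbf{Proof plan for Theorem \ref{theorem:OptimalApproxSkorohod}.}

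The plan is to reduce the error to an explicit quadrature-type remainder by combining the Skorohod--It\^o formula (Theorem \ref{theorem:SkorohodItoFormula}) with the conditional-expectation properties of the Wick product (Proposition \ref{prop:WickConditionalExpectation}). First I would write $u_s = \frac{\partial}{\partial x_1} f^{\diamond}(s,\tau_2,\ldots,\tau_K,W_s,W_{\tau_2},\ldots,W_{\tau_K})$ and note that the integrand is itself, by Proposition \ref{prop:AnalyticRepresentation} applied to $\frac{\partial}{\partial x_1} f^{\diamond}$, a function of $W_s$ and the fixed $W_{\tau_j}$ with a nice analytic representation. Since the conditioning $\sigma$-field is $\mathcal{G}_n := \sigma(\P_n \cup \{W_{\tau_2},\ldots,W_{\tau_K}\})$, and because the Skorohod integral commutes with conditional expectation on the relevant chaos (via the S-transform characterization, exactly as in \cite[Theorem 21]{NP}), the conditional expectation $\ex[\int_0^1 u_s\,dW_s\mid \mathcal{G}_n]$ is itself the Skorohod integral of the process obtained by replacing each Wick factor $W_t^{\diamond \ell}$ by $(W_t^{\lin})^{\diamond \ell} = \ex[W_t\mid\P_n]^{\diamond \ell}$, i.e. of $\frac{\partial}{\partial x_1} f^{\diamond}(s,\tau_2,\ldots,\tau_K,W_s^{\lin},W_{\tau_2},\ldots,W_{\tau_K})$, using that $\tau_j$ are held exactly. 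Hence
\begin{equation*}
\int_0^1 u_s\,dW_s - \ex\!\left[\int_0^1 u_s\,dW_s \,\Big|\, \mathcal{G}_n\right] = \int_0^1 \Bigl( \tfrac{\partial}{\partial x_1} f^{\diamond}(s,\ldots,W_s,\ldots) - \tfrac{\partial}{\partial x_1} f^{\diamond}(s,\ldots,W_s^{\lin},\ldots) \Bigr) dW_s.
\end{equation*}

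Next I would apply the Skorohod--It\^o formula (\ref{eq_thm_Ito_formula_Skorohod}), or rather the integration-by-parts identity (\ref{eq:IBPWick}), on each subinterval $[i/n,(i+1)/n)$ to the difference $f^{\diamond}(t,\ldots,W_t,\ldots) - (\text{its linear-interpolation analogue})$. The point is that on $[i/n,(i+1)/n)$ the linear interpolation $W_t^{\lin}$ is an adapted affine process whose "generator" along $\mathcal{L}$ vanishes in the diffusion part but contributes a drift $n(W_{(i+1)/n}-W_{i/n})$ in the $t_1$-direction; carefully writing the two It\^o expansions and subtracting, the Skorohod-integral parts telescope with the endpoint terms, leaving a Lebesgue integral of $\mathcal{L} f^{\diamond}$ evaluated along $W$ minus the corresponding (vanishing) term along $W^{\lin}$, plus a remainder driven by the bridge increment $W_t - W_t^{\lin}$. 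Thus the leading term of the error is
\begin{equation*}
\int_0^1 \mathcal{L} f^{\diamond}(s,\tau_2,\ldots,\tau_K,W_s^{\lin},W_{\tau_2},\ldots,W_{\tau_K})\,\bigl( \text{bridge-type factor} \bigr)\,ds,
\end{equation*}
and on $[i/n,(i+1)/n)$ the relevant factor is $\int_{i/n}^{\cdot}$ of the Brownian bridge pinned at the two grid points, whose rescaled $L^2$-contribution produces the constant $\frac{1}{12}$ (the variance of a standard Brownian bridge integrated over a unit interval is $\int_0^1 \mathbb{V}(\text{bridge}_t)\,dt = \frac{1}{12}$, the same constant as in the classical It\^o case \cite{Mueller_Gronbach}).

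The final step is the asymptotic evaluation: multiply by $n$, use the Riemann-sum convergence of $\sum_i \ex[\mathcal{L} f^{\diamond}(\xi_i,\ldots)^2]\cdot\frac1n \to \int_0^1 \ex[\mathcal{L} f^{\diamond}(s,\ldots)^2]\,ds$ (here the linear and H\"older growth conditions on $\mathcal{L} f^{\diamond}$ guarantee uniform integrability and that replacing $W^{\lin}$ by $W$ inside $\mathcal{L} f^{\diamond}$ is negligible in $L^2$, since $\|W_t - W_t^{\lin}\|_{L^2} = O(n^{-1/2})$ uniformly), and combine with the exact second moment of the Brownian-bridge quadrature weight to get $n\cdot e_n \to c_1 = \frac{1}{\sqrt{12}}(\int_0^1 \ex[\mathcal{L} f^{\diamond}(s,\ldots)^2]\,ds)^{1/2}$.

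\textbf{Main obstacle.} I expect the delicate part to be controlling the cross terms and the higher-order remainder in the subinterval It\^o expansion rigorously in $L^2(\Omega)$: one must show that, after subtracting the $W$- and $W^{\lin}$-expansions, every contribution other than the bridge-weighted $\mathcal{L} f^{\diamond}$ term is $o(n^{-1})$ in mean square, uniformly, which requires the growth bounds on $f^{\diamond}$ and its derivatives (supplied by the Wick-analytic structure) together with the $L^2$-orthogonality estimate (\ref{eq:L2NormWickProductInequality}) to dominate the chaos norms term by term, and also requires justifying the interchange of the (infinite) Wick-analytic sum with the Skorohod integral and the conditional expectation. A secondary subtlety is that the error process is genuinely nonadapted, so the bridge increments are \emph{not} independent of the integrand in the classical sense; here one leans instead on the S-transform/Wick computation, which makes the cross-moments explicit via (\ref{eq:NormOFWickProductGaussian}) and bypasses any martingale argument.
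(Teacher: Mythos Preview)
First, note that this paper does \emph{not} contain a proof of Theorem~\ref{theorem:OptimalApproxSkorohod}; the result is only quoted from \cite[Theorem~21]{NP}. So there is no in-paper proof to compare against directly. That said, the paper does use the argument behind this theorem (see the Remark following Proposition~\ref{prop:OptimalApproxAbsoluteValue}), and from that one can see that your plan contains a genuine error at the very first step.

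Your displayed identity
\[
\ex\!\left[\int_0^1 u_s\,dW_s \,\Big|\, \mathcal{G}_n\right]\;=\;\int_0^1 \tfrac{\partial}{\partial x_1} f^{\diamond}(s,\ldots,W_s^{\lin},\ldots)\,dW_s
\]
is \emph{false}: the conditional expectation of a Skorohod integral is not obtained by Skorohod-integrating the ``linearised'' integrand against $dW$. A one-line counterexample is $K=1$, $u_s=W_s$, $n=1$: then $\int_0^1 W_s\,dW_s=\tfrac12 W_1^{\diamond 2}$ is already $\sigma(W_1)$-measurable, whereas $\int_0^1 W_s^{\lin}\,dW_s=W_1\diamond\int_0^1 s\,dW_s$ is a genuinely different second-chaos element (not even $\sigma(W_1)$-measurable). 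Proposition~\ref{prop:WickConditionalExpectation} lets you push $\ex[\,\cdot\,|\mathcal G_n]$ through Wick products of \emph{random variables}, not through the Skorohod integral operator.

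The fix---and this is the route actually taken in \cite{NP} and visible in the paper's Remark after Proposition~\ref{prop:OptimalApproxAbsoluteValue}---is to reverse the order: apply the Skorohod--It\^o formula \eqref{eq_thm_Ito_formula_Skorohod} (equivalently \eqref{eq:IBPWick}) \emph{first}, writing
\[
\int_0^1 u_s\,dW_s \;=\; f^{\diamond}(1,\ldots,W_1,\ldots)-f^{\diamond}(0,\ldots,0,\ldots)-\int_0^1 \mathcal{L} f^{\diamond}(s,\ldots,W_s,\ldots)\,ds,
\]
and \emph{then} condition. The boundary terms are $\mathcal G_n$-measurable (since $W_0,W_1,W_{\tau_2},\ldots,W_{\tau_K}\in\mathcal G_n$), so they cancel, and by Proposition~\ref{prop:WickConditionalExpectation} the Lebesgue integral becomes $\int_0^1 \mathcal{L} f^{\diamond}(s,\ldots,W_s^{\lin},\ldots)\,ds$ in the Wick-analytic sense. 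Hence the error is the \emph{Lebesgue} integral
\[
-\int_0^1\Bigl(\mathcal{L} f^{\diamond}(s,\ldots,W_s,\ldots)-\mathcal{L} f^{\diamond}(s,\ldots,W_s^{\lin},\ldots)\Bigr)ds,
\]
with no stochastic integral left. From here your second and third paragraphs are essentially correct: expand via \eqref{eq:WickPowersBridge}, square, use the bridge covariances \eqref{eq_covariances2}--\eqref{eq_covariances4} (in particular \eqref{eq_covariances4} produces the $\tfrac{1}{12}n^{-3}$ per subinterval, hence $\tfrac{1}{12}n^{-2}$ in total), and a Riemann-sum argument under the growth/H\"older conditions gives $c_1$. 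Your ``main obstacle'' discussion is apt, but the nonadaptedness subtlety you flag evaporates once one sees that after the It\^o formula there is no stochastic integral in the error at all---only a pathwise $ds$-integral of Wick polynomials, whose second moment is computed directly from \eqref{eq:NormOFWickProductGaussian}.
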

The convergence rate in  Theorem \ref{theorem:OptimalApproxSkorohod} is essentially based on the exact approximation of the nonadapted parts $W_{\tau_2}, \ldots, W_{\tau_K}$ in $\ex[(I-\ex[I|\P_n \cup \{W_{\tau_2},\ldots, W_{\tau_K}\}]$.
Let us consider the following example of a stochastic exponential random variable without a finite nonadapted input:

\begin{example}
For a fixed $T \in (0,1) \setminus \Q$ we denote the fractional part as
\[
\{kT\} := kT - \lfloor kT \rfloor 
\]
and notice $\{kt\} \in (0,1) \setminus \Q$ for all $k \in \N$. Then we define the infinite chaos random variable depending on an infinite number of different nonadapted parts $W_{\{kT\}}$, $k \in \N$, as
\[
X_T = \sum_{k \geq 0} \frac{1}{k!} h^{k}_{\{kT\}}(W_{\{kT\}}) = \sum_{k \geq 0} \frac{1}{k!} W_{\{kT\}}^{\diamond k}. 
\]
Due to \eqref{eq:NormOFWickProductGaussian}, we observe
\[
\sum_{k=0}^{\infty} (k+1) \|\pi_k(X_T)\|^2_{L^2(\Omega)} = \sum_{k \geq 0} \frac{k+1}{k!} \{kT\}^k < \infty,
\]
which gives the existence of the Skorohod integral $\int_{0}^{1}u_s dW_s$ for the stationary integrand $(u_t= X_T)_{t \in [0,1]}$. One can easily check by \eqref{eq:L2NormWickProductInequality} that $X_T \in L^p(\Omega)$ for all $p>0$. Notice that $X_T$ is in some sense an infinite order Wick-analytic functional and by \cite[Proposition 10]{NP} the analytic representation 
\[
X_T = f(W_{0}, W_{\{T\}}, W_{\{2T\}}, \ldots) 
\]
is infinitely differentiable in all variables. 
Thus, in contrast to Theorem \ref{theorem:OptimalApproxSkorohod}, no finite set 

$\{W_{\tau_2},\ldots, W_{\tau_K}\}$ is sufficient for the proof technique in Theorem \ref{theorem:OptimalApproxSkorohod}.
\end{example}

This motivates to consider optimal approximation of Skorohod integrals with unattainable nonadapted parts. Surprisingly we have the following low convergence rate result in contrast to Theorem \ref{theorem:OptimalApproxSkorohod}. We consider the optimal approximation error with respect to the equidistant discretization
$$
e_{n} := \ex\left[\left(\int_{0}^{1} u_s dW_s - \ex\left[\left.\int_{0}^{1} u_s dW_s\right|\mathcal{P}_n\right]\right)^2\right]^{1/2}.
$$
The proofs of the following theorems are postponed to the next section.

\begin{theorem}\label{thm:OptApproxIrrational}
Suppose $K \in \N$, $f^{\diamond} \in C^{1,\ldots,1; 3,\ldots,3}([0,1]^K\times \R^{K})$,  $\tau_2, \ldots, \tau_K \in [0,1]$ are fixed and let
\begin{align*}
c_2 &= \frac{1}{2}  \left(\sum_{\tau_i \notin \Q}\ex\left[\left(\int_{0}^{1}\dfrac{\partial}{\partial x_i} f^{\diamond}(s,\tau_2, \ldots, \tau_K,W_{s}, W_{\tau_2}, \ldots, W_{\tau_K}) dW_s\right)^2\right]\right)^{1/2}.
\end{align*}
Then there exists a strictly increasing sequence of integers $(n_k)_{k \in \N}$, such that
\[
\lim_{k \rightarrow \infty} n_k^{1/2} e_{n_k} =c_2.
\]
\end{theorem}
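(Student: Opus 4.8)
The plan is to combine the Skorohod–It\^o formula (Theorem~\ref{theorem:SkorohodItoFormula}) with a careful analysis of how the conditional expectation $\ex[\,\cdot\,|\P_n]$ acts on Wick products $W_{t}^{\diamond l_1}\diamond W_{\tau_2}^{\diamond l_2}\diamond\cdots\diamond W_{\tau_K}^{\diamond l_K}$. By Proposition~\ref{prop:WickConditionalExpectation}, $\ex[\,\cdot\,|\P_n]$ turns each factor $W_{\tau_i}$ into $W_{\tau_i}^{\lin}=\ex[W_{\tau_i}|\P_n]$, and by Proposition~\ref{prop:SkorohodAndWick} it commutes with the Skorohod integral. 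Applying this to both $\int_0^1 u_s\,dW_s$ and its conditional expectation, and using the Skorohod–It\^o formula to rewrite $\int_0^1 \frac{\partial}{\partial x_1}f^\diamond\,dW_s$ in terms of boundary terms $f^\diamond(1,\cdot)-f^\diamond(0,\cdot)$ and the drift $\int_0^1\mathcal Lf^\diamond\,ds$, I would express $e_n^2$ as the $L^2$-norm of a sum over the Wick-analytic expansion of $f^\diamond$ of terms of the form
\[
a_{l_1,\dots,l_K}(\cdot)\Bigl(W_{\tau_2}^{\diamond l_2}\diamond\cdots\diamond W_{\tau_K}^{\diamond l_K} - (W_{\tau_2}^{\lin})^{\diamond l_2}\diamond\cdots\diamond(W_{\tau_K}^{\lin})^{\diamond l_K}\Bigr)
\]
together with contributions from $W_s$ versus $W_s^{\lin}$ in the integrand and the drift. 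The point is that when $\tau_i\in\frac1n\N$ the nonadapted factor $W_{\tau_i}$ is exactly recovered ($W_{\tau_i}^{\lin}=W_{\tau_i}$), whereas when $\tau_i\notin\Q$ this never happens for any $n$, and the error incurred is governed by $\|W_{\tau_i}-W_{\tau_i}^{\lin}\|^2$, which oscillates with $n$ but does not decay.

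Next I would extract the dominant term. The quantity $\|W_{\tau_i}-W_{\tau_i}^{\lin}\|^2 = \{n\tau_i\}(1-\{n\tau_i\})/n$ is of order $n^{-1}$ with a prefactor $\{n\tau_i\}(1-\{n\tau_i\})\in[0,\tfrac14]$. By the Weyl equidistribution theorem, since $\tau_i\notin\Q$ the sequence $(\{n\tau_i\})_n$ is equidistributed mod $1$; but more importantly, we only need a \emph{subsequence} $(n_k)$ along which, simultaneously for all indices $i$ with $\tau_i\notin\Q$, $\{n_k\tau_i\}\to\tfrac12$, so that each prefactor converges to $\tfrac14$. Such a common subsequence exists by a standard simultaneous-approximation / pigeonhole argument on the torus $(\R/\Z)^{\#\{i:\tau_i\notin\Q\}}$ (one can also invoke Weyl for the joint sequence when the $\tau_i$ together with $1$ are linearly independent over $\Q$, and handle rational dependencies by a direct selection). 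Along this subsequence, all the ``recoverable'' contributions (rational $\tau_i$, the adapted $W_s$-part, the Riemann-sum error in the drift) are genuinely $o(n_k^{-1})$ in $L^2$ — here the extra regularity $C^{1,\dots,1;3,\dots,3}$ over Theorem~\ref{theorem:OptimalApproxSkorohod} is used to control these lower-order pieces uniformly — so $n_k^{1/2}e_{n_k}^{}$ is dominated by the irrational-$\tau_i$ Wick-difference terms.

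It remains to identify the limiting constant. Collecting the leading term, $n\cdot e_n^2$ along the subsequence converges to
\[
\sum_{i:\tau_i\notin\Q}\frac14\,\ex\Bigl[\Bigl(\tfrac{\partial}{\partial x_i}\textstyle\int_0^1 f^\diamond(s,\tau_2,\dots,\tau_K,W_s,W_{\tau_2},\dots,W_{\tau_K})\,dW_s\Bigr)^2\Bigr],
\]
after one checks, via the bilinear formula \eqref{eq:NormOFWickProductGaussian} and the fact that distinct $\tau_i$ give asymptotically orthogonal error contributions (cross terms vanish in the limit because at least one factor is recovered exactly in each of the two competing products, or because the $W_s^{\lin}$-oscillations decorrelate), that the $L^2$-norm of the sum is asymptotically the sum of the $L^2$-norms; differentiating the Wick-analytic expansion term-by-term with respect to $x_i$ is exactly what produces $\frac{\partial}{\partial x_i}f^\diamond$ inside the integral, and the integration-by-parts identity \eqref{eq:IBPWick} shows this coincides with the stated Skorohod integral. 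Taking square roots gives $n_k^{1/2}e_{n_k}\to c_2$.

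The main obstacle is the bookkeeping for the cross terms and the uniform control of the error series: one must show that the tail of the Wick-analytic expansion of $f^\diamond$ (and of $\mathcal Lf^\diamond$, and of the $x_i$-derivatives) contributes negligibly after multiplication by $n$ and passing to the limit, uniformly in $n$, which is where the growth/regularity hypotheses enter, and that the interaction between the $W_s$-discretization error and the $W_{\tau_i}$-discretization error produces only $o(n^{-1})$ cross terms so that the limit genuinely decouples into the sum over irrational $\tau_i$. The equidistribution input itself is routine once the correct joint subsequence is set up.
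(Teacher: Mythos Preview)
Your proposal follows essentially the same route as the paper: reduce via Proposition~\ref{prop:WickConditionalExpectation} and the Brownian-bridge expansion \eqref{eq:WickPowersBridge} to sums governed by the factors $B^n_{\tau_i}$, compute $L^2$-norms with \eqref{eq:NormOFWickProductGaussian}, observe that cross terms between distinct $\tau_i$ vanish for large $n$ since then $\lfloor n\tau_i\rfloor\neq\lfloor n\tau_j\rfloor$ and hence $\ex[B^n_{\tau_i}B^n_{\tau_j}]=0$, invoke Weyl equidistribution to select $n_k$ with $\{n_k\tau_i\}\to\tfrac12$, and identify the limiting constant via the integration-by-parts formula \eqref{eq:IBPWick}. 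The paper organizes the computation into three explicit steps (the constant-coefficient case $K=2$, then general Wick-analytic integrands via the multivariate telescoping \eqref{eq:MainTheorem6}, then general $f^\diamond$ using \eqref{eq:IBPWick}), but the substance matches your outline.

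One caveat worth flagging: you are actually more explicit than the paper in demanding a \emph{simultaneous} subsequence with $\{n_k\tau_i\}\to\tfrac12$ for all irrational $\tau_i$ at once, and you assert that rational dependencies among the $\tau_i$ can be ``handled by a direct selection''. That assertion is not correct in general: if, for instance, $\tau_3-2\tau_2\in\Z$, then $\{n\tau_2\}\to\tfrac12$ forces $\{n\tau_3\}\to 0$, so the point $(\tfrac12,\dots,\tfrac12)$ need not lie in the orbit closure on the torus, and no subsequence achieves the stated $c_2$. The paper's proof shares this gap---it simply cites the one-variable Proposition~\ref{prop:Weyl} without addressing the joint selection---so the constant $c_2$ as written is really only guaranteed when $1,\tau_2,\dots,\tau_K$ are linearly independent over $\Q$, in which case the multidimensional Weyl theorem gives equidistribution on the full torus and both arguments go through.
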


\begin{remark}
The smoothness assumptions and Theorem \ref{theorem:SkorohodItoFormula} ensure the existence of all Skorohod integrals involved.
\end{remark}

A similar optimal rate appears for It\^o integrals of discontinuous integrands:

\begin{theorem}\label{theorem:MainTheoremIto}
Suppose $N \subset (0,1) \setminus \Q$ a countable set of discontinuities, $f \in C^{1,2}([0,1]\setminus N \times \R)$ such that $f(t,\cdot)$ is nonlinear for all fixed $t \in [0,1]\setminus N$. Then for the It\^o integral 
\[
I= \int_{0}^{1} f(s,W_s) dW_s,
\]
and the constant
\[
c_2 = \frac{1}{2}  \left(\int_{0}^{1}\ex\left[\left(\dfrac{\partial}{\partial x} f(s,W_{s})\right)^2\right]ds\right)^{1/2}, 
\]
there exists a strictly increasing sequence of integers $(n_k)_{k \in \N}$, such that
\[
\lim_{k \rightarrow \infty} n_k^{1/2} e_{n_k} = c_2.  
\]
\end{theorem}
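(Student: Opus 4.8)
The plan is to localise the error to the equidistant cells, show that everything except the cells straddling a discontinuity contributes only at the order $n^{-1}$, and then extract a subsequence along which the remaining order-$n^{-1/2}$ terms converge, using Weyl equidistribution. Throughout put $t_i = i/n$, $h = 1/n$. First I would write $I = \sum_{i=0}^{n-1} Q_i$ with $Q_i = \int_{t_i}^{t_{i+1}} f(s,W_s)\,dW_s$, and use the Markov property of $W$ to reduce the conditioning on $\mathcal P_n$ to the two cell endpoints, $\ex[Q_i\mid\mathcal P_n] = \ex[Q_i\mid W_{t_i},W_{t_{i+1}}]$, together with the conditional independence (given $\mathcal P_n$) of the Brownian bridges on distinct cells, which kills all cross terms and yields $e_n^2 = \sum_{i=0}^{n-1}\ex\big[\vari(Q_i\mid W_{t_i},W_{t_{i+1}})\big]$. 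Equivalently, passing to an $x$-antiderivative $F$ of $f$ (which is $C^{1,2}$ off $N$, with $\tfrac{\partial}{\partial x}F = f$, and continuous at the grid points since $t_i\notin N$) and applying It\^o's formula on each subinterval delimited by the grid and by the points of $N$ represents $I-\ex[I\mid\mathcal P_n]$ as the centred version of the jump functional $\sum_{a\in N}\big(F(a^+,W_a)-F(a^-,W_a)\big)$ plus a remainder $\int_0^1\mathcal L F(s,W_s)\,ds-\ex[\,\cdot\mid\mathcal P_n]$, with $\mathcal L = \tfrac{\partial}{\partial t}+\tfrac12\tfrac{\partial^2}{\partial x^2}$.

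Next I would dispose of the negligible parts. On a cell not meeting $N$, an It\^o/Taylor expansion shows that $Q_i$ equals a $\sigma(W_{t_i},W_{t_{i+1}})$-measurable leading part plus a remainder of conditional variance $O(h^3)$, so such cells contribute $O(n^{-2})$ in total; likewise the smooth remainder $\int_0^1\mathcal L F(s,W_s)\,ds-\ex[\,\cdot\mid\mathcal P_n]$ has $L^2$-norm $O(n^{-1})$ — this is exactly the mechanism behind Theorem \ref{theorem:OptimalApproxSkorohod}. Hence $n^{1/2}e_n$ is governed only by the cells meeting $N$, and for large $n$ each such cell meets $N$ in a single point, cells with several points of $N$ being ultimately negligible.

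For the discontinuity cells I would write, for $a\in N$ lying in $[t_i,t_{i+1}]$, $a = t_i+\theta h$ with $\theta = \theta_n(a):=\{na\}$; the conditional fluctuation $W_a-\ex[W_a\mid\mathcal P_n] = W_a - W_a^{\lin}$ is a centred Gaussian of variance $\theta(1-\theta)h$ independent of $\mathcal P_n$. Expanding $Q_i$ (or the jump functional of $F$) across the discontinuity at $a$ and keeping the term of lowest order in this fluctuation, a Brownian-bridge covariance computation — using the smoothness of $f$ off $N$ to justify the expansion and to bound the higher-order terms by $O(h^2)$ — identifies the contribution of the cell as a weight, read off from $\tfrac{\partial}{\partial x}f$ near $a$, times $\theta(1-\theta)h$. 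Summing over $a\in N$ and controlling the tail of the resulting series uniformly in $n$ via the growth hypotheses, I would arrive at $n\,e_n^2 = \sum_{a\in N}w(a)\{na\}(1-\{na\}) + o(1)$, where the weights satisfy $\tfrac14\sum_{a\in N}w(a) = c_2^2 = \tfrac14\int_0^1\ex\big[\big(\tfrac{\partial}{\partial x}f(s,W_s)\big)^2\big]\,ds$.

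Finally, since each $a\in N$ is irrational, $(\{na\})_n$ is equidistributed in $[0,1]$ by Weyl's theorem; applying this to finitely many elements of $N$ at a time and diagonalising over the countable set $N$, I would extract a strictly increasing $(n_k)$ along which $\{n_ka\}\to\tfrac12$, hence $\{n_ka\}(1-\{n_ka\})\to\tfrac14$, for every $a\in N$. Dominated convergence in the (summable) series then gives $n_k e_{n_k}^2\to\tfrac14\sum_{a\in N}w(a)=c_2^2$, i.e. $n_k^{1/2}e_{n_k}\to c_2$. The hard part is the uniform bookkeeping in the third step — majorising the It\^o/Taylor remainders, the cells containing several points of $N$, and the tail of the sum over $N$ by one summable, $n$-independent bound so that the limit may be taken term by term — and, with it, the exact identification of the limiting weights with $\tfrac{\partial}{\partial x}f$, hence of the constant $c_2$; the nonlinearity of $f(t,\cdot)$ is what keeps $c_2$ from degenerating, so that the optimal rate is genuinely $n^{-1/2}$. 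The Weyl/diagonal extraction and the smooth estimates are, by comparison, routine.
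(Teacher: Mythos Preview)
Your overall strategy --- localising to cells, disposing of the smooth remainder at order $O(n^{-1})$, isolating the bridge fluctuation at each discontinuity, and extracting a subsequence via Weyl and a diagonal argument --- is sound, and your handling of countably many irrationals is in fact more explicit than what the paper offers. The toolkit, however, is quite different from the paper's: you pass through a spatial antiderivative $F$ and the classical It\^o formula to produce a jump functional $\sum_{a\in N}(F(a^+,W_a)-F(a^-,W_a))$, whereas the paper never introduces $F$ but works chaos by chaos through the Wick-analytic expansion $f(s,W_s)=\sum_{k} a_k(s)W_s^{\diamond k}$, applies the Wick integration-by-parts identity \eqref{eq:IBPWick} to rewrite each $\int_0^t a_k(s)W_s^{\diamond k}\,dW_s$ as a boundary term in $W_t^{\diamond(k+1)}$ minus a time-integral, and then uses the bridge expansion \eqref{eq:WickPowersBridge} together with the moment asymptotic \eqref{eq:MainTheoremIto1} to extract the factor $\ex[(B^n_t)^2]$ before invoking Proposition~\ref{prop:Weyl}.

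The genuine gap in your proposal is the identification of the limiting constant. Expanding the jump $g_a(W_a):=F(a^+,W_a)-F(a^-,W_a)$ in the bridge $B^n_a=W_a-W_a^{\lin}$, the leading contribution is $g_a'(W_a^{\lin})B^n_a$, so the weight your own method actually produces is
\[
w(a)=\ex\big[g_a'(W_a)^2\big]=\ex\big[(f(a^+,W_a)-f(a^-,W_a))^2\big],
\]
a quantity built from the \emph{temporal jump of $f$} at the point $a$, not from the spatial derivative $\partial_x f$ integrated over $[0,1]$. Your assertion that $\tfrac14\sum_{a\in N}w(a)=c_2^2=\tfrac14\int_0^1\ex[(\partial_x f(s,W_s))^2]\,ds$ is left entirely unargued, and there is no evident identity turning a countable sum of squared jumps into a Lebesgue integral of $(\partial_x f)^2$. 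For the paper's single-discontinuity test integrand $f(s,x)\eins_{[0,t]}(s)$, your weight is $\ex[f(t,W_t)^2]$ while the stated integral equals $\int_0^t\ex[(\partial_x f(s,W_s))^2]\,ds$; try $f(s,x)=x^2$ and you will see these are $3t^2$ versus $2t^2$. So while the rate $n_k^{-1/2}$ falls out correctly from your scheme, matching the constant to the theorem's $c_2$ requires a separate argument that your sketch does not supply --- and which you should examine carefully.
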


\begin{example}
In particular, we obtain strictly increasing sequences $(n_k)_{k \in \N}$ such that, as $k$ tends to infinity:
\begin{enumerate}
 \item For the It\^o integral $I=\int_{0}^{1/\pi} e^{W_s}dW_s$,
 \begin{align*}
n_k \, e_{n_k}^2 \sim \frac{1}{8}(1-e^{-2/\pi}),
\end{align*}
\item For the Skorohod integral $I=\int_{0}^{1} e^{W_{1/\pi}}dW_s$,
\begin{align*}
n_k \, e_{n_k}^2 \sim \frac{1}{4}\left(1+\pi^{-2}\right)e^{2/\pi}.
\end{align*}
\end{enumerate}
\end{example}

\section{Proofs of Theorem \ref{thm:OptApproxIrrational} and \ref{theorem:MainTheoremIto}}\label{section_3}

In the proofs we will frequently make use of:
\begin{remark}\label{remark:NormComputations} 
The Brownian bridge is denoted by
\[
B^n_t := W_t - W_t^{\lin}
\]
and yields the expansion
\begin{equation}\label{eq:WickPowersBridge}
W_t^{\diamond k} - (W_{t}^{\lin})^{\diamond k} = B^n_t \diamond \sum\limits_{j=1}^{k} W_t^{\diamond k-j} \diamond (W_{t}^{\lin})^{\diamond j-1}. 
\end{equation}
Some elementary computations via \eqref{eq:NormOFWickProductGaussian} give for all $s,t \in [0,1]$, $i,j \in \{1,\ldots, n\}$:
\begin{align}
\displaybreak[0]
&\ex[W_s W_t^{\lin}] = \ex[W_s^{\lin} W_t^{\lin}] = \left.\begin{cases} \frac{\lfloor nt\rfloor}{m} + n(t-\frac{\lfloor nt\rfloor}{m})(s-\frac{\lfloor nt\rfloor}{m}) &,\lfloor ns\rfloor = \lfloor nt\rfloor\\ s\wedge t &,\lfloor ns\rfloor \neq \lfloor nt\rfloor
\end{cases}\right\} \leq s\wedge t,\label{eq_covariances1}\\
&\ex[B^n_s B^n_t]  = \ex[B^n_s W_t] = \left.\begin{cases} s\wedge t - \frac{\lfloor nt\rfloor}{m} -(s-\frac{\lfloor nt\rfloor}{m})(t-\frac{\lfloor nt\rfloor}{m})n &,\lfloor ns\rfloor = \lfloor nt\rfloor\\ 0 &,\lfloor ns\rfloor \neq \lfloor nt\rfloor
\end{cases}\right\} \leq \frac{1}{4n},\label{eq_covariances2}\\
&\ex[B^n_s W_t^{\lin}] =0,\label{eq_covariances3}\\
&\int_{(i-1)/n}^{i/n}\int_{(j-1)/n}^{i/n} \ex[B^n_s B^n_t] ds dt = \eins_{\{i=j\}} \frac{1}{12}n^{-3}.\label{eq_covariances4}
\end{align}
\end{remark}

The asymptotic bahaviour relies on the following worst case convergence rate of $\ex[(B^{n}_t)^2]$ as $n$ tends to infinity:

\begin{proposition}\label{prop:Weyl}
Suppose $t \in (0,1) \setminus \Q$. Then there exists a strictly increasing sequence of integers $(n_k)_{k \in \N}$ such that
\[
\ex[(B^{n_k}_t)^2] = \ex[B^{n_k}_t W_t] \sim \frac{1}{4n_k}.
\]
\end{proposition}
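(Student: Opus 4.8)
The plan is to compute $\ex[(B^n_t)^2]$ explicitly using \eqref{eq_covariances2} and then exploit equidistribution of $\{nt\}$ to find a subsequence of $n$ along which this quantity is as large as possible, namely of order $\frac{1}{4n}$. Write $n t = \lfloor nt \rfloor + \{nt\}$, so that $t$ lies in the subinterval $[\lfloor nt\rfloor/n, (\lfloor nt\rfloor+1)/n)$ and the linear interpolation $W_t^{\lin}$ is the Brownian bridge endpoint interpolation on that subinterval. Setting $\theta_n := \{nt\} \in [0,1)$, the first branch of \eqref{eq_covariances2} (the case $\lfloor ns\rfloor = \lfloor nt\rfloor$ with $s=t$) gives, after simplification,
\[
\ex[(B^n_t)^2] = \frac{1}{n}\,\theta_n(1-\theta_n).
\]
This is the standard Brownian-bridge variance $\tfrac{1}{n}\theta(1-\theta)$ at the relative position $\theta_n$ inside a mesh interval of length $1/n$. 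The identity $\ex[(B^n_t)^2] = \ex[B^n_t W_t]$ is immediate from \eqref{eq_covariances2} (second equality there) or from $\ex[B^n_t W^{\lin}_t]=0$ in \eqref{eq_covariances3}.

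Next I would invoke Weyl's equidistribution theorem: since $t \in (0,1)\setminus\Q$, the sequence $(\{nt\})_{n\in\N}$ is equidistributed modulo $1$ in $[0,1)$, and in particular it is dense in $[0,1)$. Hence for every $\ve>0$ there are infinitely many $n$ with $|\{nt\} - 1/2| < \ve$, i.e. with $\theta_n(1-\theta_n) > 1/4 - \ve$. Choosing $\ve = \ve_k \downarrow 0$ and picking, recursively, $n_k$ strictly larger than $n_{k-1}$ with $\theta_{n_k}(1-\theta_{n_k}) > 1/4 - \ve_k$ yields a strictly increasing sequence $(n_k)$ along which $\theta_{n_k}(1-\theta_{n_k}) \to 1/4$, and therefore
\[
\ex[(B^{n_k}_t)^2] = \frac{1}{n_k}\,\theta_{n_k}(1-\theta_{n_k}) \sim \frac{1}{4 n_k},
\]
which is the claim.

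The only genuinely substantive input is Weyl's theorem (equivalently, density of the orbit $\{nt \bmod 1 : n \in \N\}$ for irrational $t$); everything else is the elementary bridge-variance computation from Remark \ref{remark:NormComputations}. I expect the main (minor) obstacle to be purely bookkeeping: carefully reading off $\ex[(B^n_t)^2] = \tfrac{1}{n}\{nt\}(1-\{nt\})$ from the first branch of \eqref{eq_covariances2} with the correct identification of $\lfloor nt\rfloor/n$ and the mesh width, and then arranging the subsequence to be \emph{strictly} increasing (which costs nothing, since each "good" set of indices is infinite). No uniformity or rate in Weyl's theorem is needed — mere density along a subsequence suffices for the asymptotic equivalence stated.
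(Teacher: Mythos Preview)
Your proposal is correct and follows essentially the same approach as the paper: compute the bridge variance from \eqref{eq_covariances2} as $\ex[(B^n_t)^2]=\frac{1}{n}\{nt\}(1-\{nt\})$, then invoke Weyl's equidistribution theorem to extract a strictly increasing sequence $(n_k)$ with $\{n_k t\}\to 1/2$. Your write-up is in fact slightly more explicit about the subsequence construction and correctly observes that mere density of $(\{nt\})_{n\in\N}$ in $[0,1)$ already suffices.
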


\begin{proof}
Due to the Weyl equidistribution theorem \cite{Weyl}, for every irrational $t \in (0,1)$ there exists a strictly increasing sequence of integers $(n_k)_{k \in \N}$ with 
$$
\lim_{k \rightarrow \infty} \left(n_k t - \lfloor n_k t\rfloor\right) = 1/2.
$$
Hence, by \eqref{eq_covariances2},
$$
\ex[(B^{n_k}_t)^2] = \ex[B^{n_k}_t W_t] = (t-\lfloor n_k t \rfloor/n_k)(1-n_k t - \lfloor n_k t \rfloor) \sim \frac{1}{4 n_k}.
$$
\end{proof}

We firstly consider the simpler Theorem \ref{theorem:MainTheoremIto}.

\begin{proof}[Proof of Theorem \ref{theorem:MainTheoremIto}]
We consider only the integrand $f(s,x) \eins_{[0,t]}(s)$ for some $f \in C^{1,2}([0,1] \times \R)$ and a fixed $t \in (0,1) \setminus \Q$. Then the statement of the theorem follows by linearity and the It\^o isometry. The Wick-analytic representation \eqref{eq:WienerChaosByWickProducts} gives
\[
f(s,W_s) = \sum_{k \geq 0} a_k(s) W_s^{\diamond k} 
\]
with $a_k(\cdot) \in C^1([0,1])$. Due to the Wiener chaos decomposition it suffices to consider the individual chaoses $a_k(s) W_s^{\diamond k}$. Thanks to \eqref{eq:NormOFWickProductGaussian} and Remark \ref{remark:NormComputations}, for all $s,t \in [0,1]$, we have
\begin{align}\label{eq:MainTheoremIto1}
\ex\left[\left(W_t^{\diamond k} - (W_{t}^{\lin})^{\diamond k}\right)\left(W_s^{\diamond k} - (W_{s}^{\lin})^{\diamond k}\right)\right] &-\ex[B^n_t B^n_s]\ex\left[\left(kW_t^{\diamond k-1}\right)\left(kW_s^{\diamond k-1}\right)\right] \rightarrow 0
\end{align}
as $n$ tends to infinity. Thus, by the integration by parts formula in \eqref{eq:IBPWick}, Remark \ref{remark:NormComputations} and \eqref{eq:MainTheoremIto1} along the subsequence in Proposition \ref{prop:Weyl},  we conclude 
\begin{align*}
\displaybreak[0]
&\ex\left[\left(\int_{0}^{t} a_k(s) W_s^{\diamond k} dW_s -\ex[\int_{0}^{t} a_k(s) W_s^{\diamond k}dW_s |\P_{n}]\right)^2\right]\\
&= \ex\left[\left(a_k(t) \frac{1}{k+1}\left(W_t^{\diamond k+1} - (W_{t}^{\lin})^{\diamond k+1}\right)  -\sum_{i=1}^{n} \int_{(i-1)/n \wedge t}^{i/n \wedge t}a_k'(s) \left(W_s^{\diamond k+1} - (W_{s}^{\lin})^{\diamond k+1}\right)ds\right)^2\right]\\
&\sim \ex[(B^n_t)^2]\ex\left[\left(a_k(t) W_t^{\diamond k}  -\int_{0}^{t}a_k'(s) W_s^{\diamond k} ds\right)^2\right] \sim \frac{1}{4 n} \ex\left[\left(\int_{0}^{t} a_k(s) k W_s^{\diamond k-1} dW_s\right)^2\right].
\end{align*}
Hence, $a_k(s) k W_s^{\diamond k-1} = \dfrac{\partial}{\partial x}a_k(s) W_s^{\diamond k}$ and the It\^o isometry yield the asserted constant.
\end{proof}

\begin{proof}[Proof of Theorem \ref{thm:OptApproxIrrational}]
The proof is divided into three steps. In the first step we consider the simplest case of nonadapted integrands
$$
F^{\diamond}(W_s, W_{\tau_2})
$$
for some Wick-analytic functional $F^{\diamond}$. Then, in the second step, we conclude the statement for Wick-analytic functionals
$$
F^{\diamond}(W_s, W_{\tau_2}, \ldots, W_{\tau_K}).
$$
Finally, in the last step, we complete the proof to arbitrary integrands in the theorem.

\textit{Step 1}: \ Let the simple integrand
$$
F^{\diamond}(W_s,W_{\tau_2}) = \sum\limits_{k\geq 0, l>0} a_{k,l} l \, W_s^{\diamond (l-1)} \diamond W_{\tau_2}^{\diamond k}.
$$
(This strange form is chosen to simplify the integration by parts formula in \eqref{eq:IBPWick})
Firstly, due to the expansion in \eqref{eq:WickPowersBridge}, \eqref{eq:NormOFWickProductGaussian} and \eqref{eq_covariances1}-\eqref{eq_covariances3}, for $k,k', l, l' \in \N$, $t\in [0,1]$, we observe
\begin{align}
\displaybreak[0]
&\ex\left[\left((W_t^{\diamond k} -(W_{t}^{\lin})^{\diamond k})\diamond W_1^{\diamond l}\right)\left((W_t^{\diamond k'} -(W_{t}^{\lin})^{\diamond k'})\diamond W_1^{\diamond l'}\right)\right]\nonumber\\
&= \sum\limits_{\substack{i=1,\ldots, k\\ j=1,\ldots, k'}} \ex\left[\left(B^n_t \diamond  W_t^{\diamond k-i} \diamond (W_{t}^{\lin})^{\diamond i-1} \diamond W_1^{\diamond l}\right) \left(B^n_t \diamond  W_t^{\diamond k'-j} \diamond (W_{t}^{\lin})^{\diamond j-1} \diamond W_1^{\diamond l'}\right)\right]\nonumber\\
&= \ex[(B^n_t)^2]  \sum\limits_{\substack{i=1,\ldots, k\\ j=1,\ldots, k'}} \ex\left[\left(W_t^{\diamond k-i} \diamond (W_{t}^{\lin})^{\diamond i-1} \diamond W_1^{\diamond l}\right) \left(W_t^{\diamond k'-j} \diamond (W_{t}^{\lin})^{\diamond j-1} \diamond W_1^{\diamond l'}\right)\right]\nonumber\\
&\quad + \ex[B^n_t W_t]^2 \sum\limits_{\substack{i=1,\ldots, k-1\\ j=1,\ldots, k'-1}} (k-i)(k'-j)\nonumber\\ 
&\hspace*{3 cm} \times \ex\left[\left(W_t^{\diamond k-i-1} \diamond (W_{t}^{\lin})^{\diamond i-1} \diamond W_1^{\diamond l}\right) \left(W_t^{\diamond k'-j-1} \diamond (W_{t}^{\lin})^{\diamond j-1} \diamond W_1^{\diamond l'}\right)\right].\label{eq:MainTheorem1}
\end{align}
Making use of \eqref{eq:NormOFWickProductGaussian} and \eqref{eq:MainTheorem1}, we have the two sums
\begin{align}
\displaybreak[0]
e_n^2&=\ex\left[\left(\sum\limits_{k, l>0} a_{k,l} (W_{\tau_2}^{\diamond k} - (W_{\tau_2}^{\lin})^{\diamond k})\diamond W_1^{\diamond l}\right)^2\right]\nonumber\\
\displaybreak[0]
&= \ex[(B^n_{\tau_2})^2]\sum\limits_{k+l=k'+l' \geq 1} a_{k,l} a_{k',l'}\nonumber\\
&\qquad \times \ex\left[\left(\sum\limits_{i=1}^{k} W_{\tau_2}^{\diamond k-i} \diamond (W_{\tau_2}^{\lin})^{\diamond i-1} \diamond W_1^{\diamond l}\right) \left(\sum\limits_{i'=1}^{k'} W_{\tau_2}^{\diamond k'-i'} \diamond (W_{\tau_2}^{\lin})^{\diamond i'-1} \diamond W_1^{\diamond l'}\right)\right]\nonumber\\
&+ \ex[B^n_{\tau_2} W_{\tau_2}]^2 \sum\limits_{k+l=k'+l' \geq 1} a_{k,l} a_{k',l'} \nonumber\\
&\times \ex\left[\left(\sum\limits_{i=1}^{k} (k-i)W_{\tau_2}^{\diamond k-i-1} \diamond (W_{\tau_2}^{\lin})^{\diamond i-1} \diamond W_1^{\diamond l}\right) \left(\sum\limits_{i'=1}^{k'} (k'-i')W_{\tau_2}^{\diamond k'-i'-1} \diamond (W_{\tau_2}^{\lin})^{\diamond i'-1} \diamond W_1^{\diamond l'}\right)\right]\nonumber\\ 
&=:X^n_1 + X^n_2.\label{eq:MainTheorem2}
\end{align}
Obviously, if $\tau_2 \in \Q$, then we can choose a strictly increasing sequence of indices $n \in \N$ such that $B^n_{\tau_2} =0$ and $e_n^2=0$.

A computation via \eqref{eq:NormOFWickProductGaussian} and \eqref{eq:WickPowersBridge} shows that the $L^2$-norm of the difference
$$
\left(\sum\limits_{i=1}^{k} W_{\tau_2}^{\diamond k-i} \diamond (W_{\tau_2}^{\lin})^{\diamond i-1} \diamond W_1^{\diamond l}\right) - \left(k W_{\tau_2}^{\diamond (k-1)} \diamond W_1^{\diamond l}\right)
$$
behaves in upper bounds as the $L^2$-norm of
$$
B^n_{\tau_2} \diamond k(k-1) W_{\tau_2}^{\diamond (k-1)} \diamond W_1^{\diamond l}.
$$
Hence,
via \eqref{eq_covariances2}-\eqref{eq_covariances4} in Remark \ref{remark:NormComputations} and the assumptions on the integrand, we conclude for
\begin{align*}
f_n^2 &:= \ex[(B^n_{\tau_2})^2]\sum\limits_{k+l=k'+l' \geq 1} a_{k,l} a_{k',l'}\ex\left[\left(k W_{\tau_2}^{\diamond (k-1)} \diamond W_1^{\diamond l}\right) \left(k' W_{\tau_2}^{\diamond (k'-1)} \diamond W_1^{\diamond l'}\right)\right]\\
&=\ex[(B^n_t)^2]  \ex\left[\left(\int_{0}^{1} \dfrac{\partial}{\partial x_2} F^{\diamond}(W_s,W_{\tau_2}) dW_s\right)^2\right],
\end{align*}
that
\begin{align}\label{eq:MainTheorem3}
|X^n_1 - f_n^2| \in \Oo(n^{-2}). 
\end{align}
By a similar reasoning via \eqref{eq:NormOFWickProductGaussian} and Remark \ref{remark:NormComputations} for 
\begin{align*}
g_n^2 &:= \ex[B^n_{\tau_2} W_{\tau_2}]^2 \sum\limits_{k+l=k'+l' \geq 1} a_{k,l} a_{k',l'} \ex\left[\left(\binom{k}{2} W_{\tau_2}^{\diamond (k-2)} \diamond W_1^{\diamond l}\right) \left(\binom{k'}{2} W_{\tau_2}^{\diamond (k'-2)}\diamond W_1^{\diamond l'}\right)\right]\\
&= \ex[B^n_{\tau_2} W_{\tau_2}]^2  \ex\left[\left(\int_{0}^{1} \dfrac{\partial^2}{\partial x_2^2} F^{\diamond}(W_s,W_{\tau_2}) dW_s\right)^2\right]
\end{align*}
we have
\begin{align}\label{eq:MainTheorem4}
|X^n_2 - g_n^2| \in \Oo(n^{-3}).
\end{align}
The Skorohod integrals $\int_{0}^{1} \dfrac{\partial}{\partial x_2} F^{\diamond}(W_s,W_{\tau_2}) dW_s$ and $\int_{0}^{1} \dfrac{\partial^2}{\partial x_2^2} F^{\diamond}(W_s,W_{\tau_2}) dW_s$ involved exist by the continuity assumptions and Theorem \ref{theorem:SkorohodItoFormula}.
Hence, by \eqref{eq:MainTheorem2}-\eqref{eq:MainTheorem4}, we conclude
$$
e_n^2 = f_n^2 + \Oo(n^{-2}).
$$
With the sequence in Proposition \ref{prop:Weyl}, this yields
\begin{align}
\displaybreak[0]
\lim_{k \rightarrow \infty} n_k \cdot e_{n_k}^2= \frac{1}{4}\ex\left[\left(\int_{0}^{1} \dfrac{\partial}{\partial x_2} F^{\diamond}(W_s,W_{t}) dW_s\right)^2\right].\label{eq:MainTheorem5}
\end{align}

\textit{Step 2}: \ Now we make use of the shorthand notations 
\begin{align}
\bar{l} := (l_1,\ldots, l_K) \in \N^K,\quad |\bar{l}| :=\sum l_i,\quad  a'_{\bar{l}}(s) := \dfrac{\partial}{\partial s}a_{l_1,\ldots, l_K}(s).\label{eq:MainTheoremShortance} 
\end{align}
Dealing with a Wick-analytic functional integrand
$$
F^{\diamond}(W_s, W_{\tau_2}, \ldots, W_{\tau_K}) = \sum\limits_{l_1, \ldots, l_K \geq 0} a_{\bar{l}} \, l_1 W_s^{\diamond (l_1-1)} \diamond W_{\tau_2}^{\diamond l_2} \diamond \cdots \diamond W_{\tau_K}^{\diamond l_K},
$$
we make use of the multivariate expansion of \eqref{eq:MainTheorem2} as
\begin{align}
\displaybreak[0]
&W_{\tau_2}^{\diamond l_2} \diamond \cdots \diamond W_{\tau_K}^{\diamond l_K} - (W_{\tau_2}^\lin)^{\diamond l_1} \diamond \cdots \diamond (W_{\tau_K}^{\lin})^{\diamond l_K}\nonumber\\ 
&= \sum\limits_{I=2}^{K} B^n_{\tau_I} \diamond \sum\limits_{i=1}^{l_I} W_{\tau_I}^{\diamond l_I-i} \diamond (W_{\tau_{I}}^{\lin})^{\diamond i-1} \diamond W_{\tau_2}^{\diamond l_2} \diamond \cdots \diamond W_{\tau_{I-1}}^{\diamond l_{I-1}} \diamond  (W_{\tau_{I+1}}^{\lin})^{\diamond l_{I+1}} \diamond \cdots \diamond  (W_{\tau_{K}}^{\lin})^{\diamond l_{K}}.\label{eq:MainTheorem6}
\end{align}
Then, via
\eqref{eq:NormOFWickProductGaussian}, Remark \ref{remark:NormComputations} and \eqref{eq:MainTheorem6}, for appropriate and sufficiently large $n$ we obtain similarly to \eqref{eq:MainTheorem2},
\begin{align*}
\displaybreak[0]
e_n^2&=\sum\limits_{\tau_I\notin \Q} \ex[(B^n_{\tau_I})^2]  \sum\limits_{|\bar{l}|=|\bar{l}'| \geq 1} a_{\bar{l}} a_{\bar{l}'}\nonumber\\
&\quad \times \ex\left[\left(\sum_{i=1}^{l_{I}}  W_{\tau_I}^{\diamond l_I-i} \diamond (W_{\tau_I}^{\lin})^{\diamond i-1} \diamond W_{\tau_2}^{\diamond l_1} \diamond \cdots \diamond W_{\tau_{I-1}}^{\diamond l_{I-1}} \diamond  
\cdots \diamond  (W_{\tau_{K}}^{\lin})^{\diamond l_{K}} \diamond W_1^{\diamond l_1}\right)\right.\nonumber\\
&\hspace*{1.5 cm} \times \left. \left(\sum_{i'=1}^{l_{I}'} W_{\tau_{I}}^{\diamond l'_{I}-i'} \diamond (W_{\tau_{I}}^{\lin})^{\diamond i'-1} \diamond W_{\tau_2}^{\diamond l'_2} \diamond \cdots \diamond W_{\tau_{I-1}}^{\diamond l'_{I'-1}} \diamond  
\cdots \diamond  (W_{\tau_{K}}^{\lin})^{\diamond l'_{K}} \diamond W_1^{\diamond l'_1}\right)\right]\nonumber\\
&+ \sum\limits_{\tau_I \notin \Q} \ex[(B^n_{\tau_I}W_{\tau_{I}})^2] \sum\limits_{|\bar{l}|=|\bar{l}'| \geq 1} a_{\bar{l}} a_{\bar{l}'}\nonumber\\
&\quad \times \ex\left[\left(\sum_{i=1}^{l_{I}} (l_I-i) W_{\tau_I}^{\diamond l_I-i-1} \diamond (W_{\tau_I}^{\lin})^{\diamond i-1} \diamond W_{\tau_2}^{\diamond l_1} \diamond \cdots \diamond W_{\tau_{I-1}}^{\diamond l_{I-1}} \diamond  
\cdots \diamond W_1^{\diamond l_1}\right)\right.\nonumber\\
& \qquad \quad \times \left. \left(\sum_{i'=1}^{l_{I}'} (l_{I}'-i')W_{\tau_{I}}^{\diamond l'_{I}-i'-1} \diamond (W_{\tau_{I}}^{\lin})^{\diamond i'-1} \diamond W_{\tau_2}^{\diamond l'_2} \diamond \cdots \diamond W_{\tau_{I-1}}^{\diamond l'_{I-1}} \diamond  
\cdots \diamond W_1^{\diamond l'_1}\right)\right]\nonumber\\
&=:X^n_1 + X^n_2.
\end{align*}
Here the indices $I$ with $\tau_I \in \Q$ are ignored due to $B^n_{\tau_I}=0$ for an appropriate sequence of indices. Moreover, for sufficiently large $n$ we have  $\lfloor n\tau_{I} \rfloor \neq \lfloor n\tau_{I'}\rfloor$ for all $\tau_{I} \neq \tau_{I'}$ and thus, via \eqref{eq_covariances2}, 
$$
\ex[B^n_{\tau_{I'}}W_{\tau_{I}}] = \ex[B^n_{\tau_{I'}}B^n_{\tau_{I}}]=0.
$$
Thus, by an analogous reasoning as for \eqref{eq:MainTheorem3} and the assumptions on the integrand, we conclude
$$
|X^n_1 - f_n^2| \in \Oo(n^{-2})
$$
for
\begin{align*}
f_n^2 &:= \sum\limits_{\tau_I\notin \Q} \ex[(B^n_{\tau_I})^2]  \sum\limits_{|\bar{l}|=|\bar{l}'| \geq 1} a_{\bar{l}} a_{\bar{l}'}\ex\left[\left(W_{\tau_2}^{\diamond l_1} \diamond \cdots \diamond  l_I W_{\tau_I}^{\diamond l_I-1} \diamond
\cdots \diamond  (W_{\tau_{K}}^{\lin})^{\diamond l_{K}} \diamond W_1^{\diamond l_1}\right)\right.\nonumber\\
& \qquad \qquad \qquad \qquad \qquad \qquad \qquad \times \left. \left(W_{\tau_2}^{\diamond l_1} \diamond \cdots \diamond   l_I' W_{\tau_I}^{\diamond l_I'-1} \diamond
\cdots \diamond  (W_{\tau_{K}}^{\lin})^{\diamond l_{K}} \diamond W_1^{\diamond l_1}\right)\right]\nonumber\\
&=\sum\limits_{\tau_I \notin \Q} \ex[(B^n_{\tau_I})^2]\ex\left[\left(\int_{0}^{1}\dfrac{\partial}{\partial x_I} F^{\diamond}(W_s, W_{\tau_2}, \ldots, W_{\tau_K}) dW_s\right)^2\right].
\end{align*}
Similarly to \eqref{eq:MainTheorem4}, we have
$$
|X_2^n - g_n^2| \in \Oo(n^{-3})
$$
for 
\begin{align*}
g_n^2 &:= \sum\limits_{\tau_I \notin \Q} \ex[B^n_{\tau_2} W_{\tau_2}]^2 \ex\left[\left(\int_{0}^{1}\dfrac{\partial^2}{\partial x_I^2} F^{\diamond}(W_s, W_{\tau_2}, \ldots, W_{\tau_K}) dW_s\right)^2\right].
\end{align*}
Proposition \ref{prop:Weyl} yields again an appropriate sequence with
\begin{align*}
\lim_{k \rightarrow \infty} n_k \cdot e_{n_k}^2= \frac{1}{4} \sum_{\tau_i \notin \Q} \ex\left[\left(\int_{0}^{1}\dfrac{\partial}{\partial x_i} F^{\diamond}(W_s, W_{\tau_2}, \ldots, W_{\tau_K}) dW_s\right)^2\right].
\end{align*}
We notice that all Skorohod integrals above exist by the Skorohod It\^o formula in Theorem \ref{theorem:SkorohodItoFormula}.\\

\textit{Step 3}: \ The generalization to arbitrary integrands follows by a straightforward extension of the computations above and the Wiener chaos decomposition. For the  arbitrary integrand, we have
\begin{equation*}
f(t, \tau_2\ldots, \tau_K,W_{t}, \ldots, W_{\tau_K}) = \sum_{l_1, \ldots, l_K\geq 0} a_{l_1, \ldots, l_K}(t) W_{t}^{\diamond l_1} \diamond W_{\tau_2}^{\diamond l_2} \diamond \cdots \diamond W_{\tau_K}^{\diamond l_K},
\end{equation*}
where the continuity assumption implies for all coefficients $a_{l_1,\ldots, l_K}(\cdot) \in C^1([0,1])$. We consider exemplary the integrand 
\[
f(s,t,W_s, W_{t}) = a(s)\, l\, W_s^{\diamond (l-1)} \diamond W_t^{\diamond m}                                                                                                                                                                                                                                                                                                                                                                                                                      \]
for some fixed $t \in (0,1)\setminus \Q$, $m,l \geq 1$. The generalization is straightforward. Thanks to the integration by parts formula \eqref{eq:IBPWick}, we have
\begin{align*}
\displaybreak[0]
e_n^2 &=\ex\left[\left(a(1)W_1^{\diamond l} \diamond ((W_t^{\diamond m} - (W_{t}^{\lin})^{\diamond m})\right.\right.\\
&\left.\left.\quad - \sum_{i=1}^{n} \int_{(i-1)/n}^{i/n} a'(s) (W_s^{\diamond l} \diamond W_t^{\diamond m} - (W_s^{\lin})^{\diamond l} \diamond (W_t^{\lin})^{\diamond m})ds\right)^2\right]\\
&= \ex\left[\left(a(1)W_1^{\diamond l} \diamond ((W_t^{\diamond m} - (W_{t}^{\lin})^{\diamond m})\right)^2\right]\\
&-2\sum_{i=1}^{n} \int_{\frac{i-1}{n}}^{\frac{i}{n}} a'(s)a(1)\ex\left[\left(W_1^{\diamond l} \diamond ((W_t^{\diamond m} - (W_{t}^{\lin})^{\diamond m})\right)\left(W_s^{\diamond l} \diamond W_t^{\diamond m} - (W_s^{\lin})^{\diamond l} \diamond (W_t^{\lin})^{\diamond m}\right)\right]ds\\
&+ \ex\left[\left(\sum_{i=1}^{n} \int_{(i-1)/n}^{i/n} a'(s) (W_s^{\diamond l} \diamond W_t^{\diamond m} - (W_s^{\lin})^{\diamond l} \diamond (W_t^{\lin})^{\diamond m})ds\right)^2\right]\\
&=: I^n_1 -2I^n_2+I^n_3.
\end{align*}
By \eqref{eq:MainTheorem5} in Step 1, for the sequence of integers in Proposition \ref{prop:Weyl}, we have
\[
n_k \cdot I^{n_k}_1 - \frac{1}{4} \ex\left[\left(a(1) W_1^{\diamond l} \diamond mW_t^{\diamond m-1}\right)^2\right] \rightarrow 0,
\]
as $k$ tends to infinity. By the expansion \eqref{eq:MainTheorem6} in Step 2 and \eqref{eq:MainTheorem5}, we conclude similarly
\begin{align*}
&n_k \cdot I^{n_k}_2 - \frac{1}{4} \ex\left[\left(a(1) W_1^{\diamond l} \diamond mW_t^{\diamond m-1}\right) \left(\int_{0}^{1}a'(s) W_s^{\diamond l} \diamond m W_t^{\diamond m-1} ds\right)\right] \rightarrow 0,\\
&n_k \cdot I^{n_k}_3 - \frac{1}{4} \ex\left[ \left(\int_{0}^{1}a'(s) W_s^{\diamond l} \diamond m W_t^{\diamond m-1} ds\right)^2\right] \rightarrow 0.
\end{align*}
Due to Proposition \ref{prop:SkorohodAndWick} and the integration by parts formula in \eqref{eq:IBPWick},
$$
a(1) W_1^{\diamond l} \diamond mW_t^{\diamond m-1} - \int_{0}^{1}a'(s) W_s^{\diamond l} \diamond m W_t^{\diamond m-1} ds = \int_{0}^{1} a(s) l W_s^{\diamond l} \diamond m W_t^{\diamond m-1} dW_s.
$$
Hence we conclude
\[
\lim_{k \rightarrow \infty} n_k e_{n_k}^2 = \frac{1}{4}\ex\left[\left(\int_{0}^{1} a(s) l W_s^{\diamond l} \diamond m W_t^{\diamond m-1} dW_s\right)^2\right]= \frac{1}{4}\ex\left[\left(\int_{0}^{1}\dfrac{\partial}{\partial x_2} f(s,t,W_s, W_{t}) dW_s\right)^2\right].
\]
Thanks to the same arguments as above and in Step 1 and Step 2, the Wiener chaos decomposition completes the proof of the statement for arbitrary integrands $f \in C^{1,\ldots,1;3,\ldots, 3}([0,1]^K\times \R^{K})$. The existence of all Skorohod integrals involved is justified by Theorem \ref{theorem:SkorohodItoFormula}. 
\end{proof}

\section{Examples of further irregularity}\label{section_4}

In this section we discuss optimal approximation for Skorohod integrals of integrands beyond the continuity conditions in Theorem \ref{theorem:OptimalApproxSkorohod} and Theorem \ref{thm:OptApproxIrrational}. In fact, we present Skorohod integrals $I$
such that the mean square error
$$
e_n = \ex[(I - \ex[I|\mathcal{P}_n])^2]^{1/2}
$$
exhibits a lower convergence rate than $n^{-1/2}$. Moreover we construct Skorohod integrals with infinite nonadapted part (i.e. an integrand which depends on an infinite set $\{W_{t}, t>0\}$) and optimal rates $n^{-\alpha}$ with $\alpha \in (0,1)$.



\begin{proposition}\label{prop:OptimalApproxAbsoluteValue}
Let $t \in [0,1]$ and the Skorohod integral
\begin{align}\label{eq:SkorohodAbsValue} 
I=\int_{0}^{1} W_t| dW_s = |W_t| \diamond W_1.
\end{align}
Then there exists in each case a strictly increasing sequence of integers $(n_k)_{k \in \N}$ such that:
\begin{enumerate}
 \item[(i)] If $t \in \Q$, then $e_{n_k}=0$.
 \item[(ii)] If $t \notin \Q$, then for all $\ve\in (0,1/4)$,
\begin{equation}\label{eq:thmOptApproxAbsValue1}
\frac{t^{3/4}}{\sqrt{2\pi}} \, n_k^{-1/4} \leq e_{n_k} \in \Oo(n_k^{-1/4+\ve}).
\end{equation}
\end{enumerate}
\end{proposition}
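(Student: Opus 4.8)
The plan is to treat the two cases separately; part (ii) rests on reducing $e_n^2$ to an explicit convergent series in the single small parameter $\rho_n := \ex[(B^n_t)^2]/t$ and then analysing its tail.

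Part (i) is immediate. If $t \in \Q \setminus \{1\}$, write $t = p/q$ in lowest terms and set $n_k := kq$; then $t \in \tfrac{1}{n_k}\N$, so $B^{n_k}_t = 0$ and $W_t$ is $\mathcal{P}_{n_k}$-measurable. By Proposition \ref{prop:WickConditionalExpectation}, $\ex[I \mid \mathcal{P}_{n_k}] = \ex[|W_t| \mid \mathcal{P}_{n_k}] \diamond \ex[W_1 \mid \mathcal{P}_{n_k}] = |W_t| \diamond W_1 = I$, hence $e_{n_k} = 0$ (and $e_n = 0$ for all $n$ if $t \in \{0,1\}$).

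For part (ii) I would first expand $|W_t| = \sum_{k \geq 0} a_k W_t^{\diamond 2k}$ in Wick powers (the Hermite expansion with respect to $\mathcal{N}(0,t)$; only even powers occur), writing $\gamma_k := a_k^2 (2k)!\, t^{2k} = t\, b_k^2 (2k)!$ with $b_k^2(2k)! = \tfrac{2}{\pi}\binom{2k}{k} 4^{-k} (2k-1)^{-2}$ the squared normalised Hermite coefficients of $|x|$ for the standard Gaussian, so that $\sum_k \gamma_k = \ex[|W_t|^2] = t$, $\gamma_k \sim \tfrac{t}{2\pi^{3/2}} k^{-5/2}$, and $\sum_k (2k+1)\gamma_k < \infty$ (this last re-derives the Skorohod integrability of the constant process $u \equiv |W_t|$). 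Since $\ex[W_t^{\diamond 2k}\mid\mathcal{P}_n] = (W_t^{\lin})^{\diamond 2k}$, Proposition \ref{prop:WickConditionalExpectation} gives $\ex[I \mid \mathcal{P}_n] = \big(\sum_k a_k (W_t^{\lin})^{\diamond 2k}\big)\diamond W_1$; by orthogonality of distinct Wiener chaoses and the norm formula $\|I(\phi)^{\diamond m}\diamond I(\psi)\|^2 = m!\,\|\phi\|^{2(m-1)}\big(\|\phi\|^2\|\psi\|^2 + m\langle\phi,\psi\rangle^2\big)$ from \eqref{eq:NormOFWickProductGaussian} --- evaluated with $(\phi,\psi) = (\eins_{[0,t]}, \eins_{[0,1]})$ and with the pair corresponding to $(W_t^{\lin}, W_1)$, using $\ex[W_t W_t^{\lin}] = \ex[(W_t^{\lin})^2] = v_n^2$ and $\ex[W_t^{\lin} W_1] = t$ from Remark \ref{remark:NormComputations} --- the identity $e_n^2 = \|I\|^2 - \|\ex[I\mid\mathcal{P}_n]\|^2$ would become, with $v_n^2 = t - \ex[(B^n_t)^2]$,
\begin{equation*}
e_n^2 = \sum_{k \geq 1} \gamma_k \Big[\big(1 - (1-\rho_n)^{2k}\big) + 2kt\,\big(1 - (1-\rho_n)^{2k-1}\big)\Big].
\end{equation*}

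Both bounds would then follow from this formula. For the upper bound, use $1 - (1-\rho_n)^m \leq \min(m\rho_n, 1)$ and split at $k \asymp 1/\rho_n$: with $\gamma_k = O(k^{-5/2})$ one gets $\rho_n\sum_{k \leq 1/\rho_n}\gamma_k k^2 = O(\sqrt{\rho_n})$ and $\sum_{k > 1/\rho_n}\gamma_k k = O(\sqrt{\rho_n})$, and since $\rho_n \leq 1/(4nt)$ by \eqref{eq_covariances2} this gives $e_n^2 = O(n^{-1/2})$, hence $e_n \in \Oo(n^{-1/4}) \subseteq \Oo(n^{-1/4+\ve})$ (a soft polynomial bound on $\binom{2k}{k}4^{-k}$ already suffices for the $\ve$-version in the statement). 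For the lower bound, Proposition \ref{prop:Weyl} supplies a subsequence $(n_k)$ with $\ex[(B^{n_k}_t)^2] \sim \tfrac{1}{4n_k}$, i.e.\ $\rho_{n_k}\sim\tfrac{1}{4n_k t}$; keeping only the summands with $k \geq 1/\rho_{n_k}$, on which $1 - (1-\rho_{n_k})^{2k-1}\geq 1 - e^{-1}$, and only the part $2kt\,\gamma_k$, and bounding $\binom{2k}{k}4^{-k}$ from below by $\big(\pi(k+\tfrac{1}{2})\big)^{-1/2}$, the residual sum $\sum_{k\geq 1/\rho_{n_k}} k^{-3/2}$ is $\gtrsim \rho_{n_k}^{1/2}$; substituting $\rho_{n_k}\sim\tfrac{1}{4n_k t}$ and taking square roots produces the claimed $\tfrac{t^{3/4}}{\sqrt{2\pi}}\, n_k^{-1/4} \leq e_{n_k}$.

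The hard part is the analysis of this series at the transition scale $k \asymp 1/\rho_n$: there neither $1-(1-\rho_n)^{2k}\approx 2k\rho_n$ nor $1-(1-\rho_n)^{2k}\approx 1$ is accurate, yet --- in contrast to Theorems \ref{theorem:OptimalApproxSkorohod} and \ref{thm:OptApproxIrrational}, where the rate is governed by finitely many chaoses --- it is exactly this borderline block of chaoses (and not any fixed finite set) that carries the leading $n^{-1/2}$-order of $e_n^2$, which is what drops the rate to $n^{-1/4}$. Pinning down the explicit constant in the lower bound additionally needs a non-wasteful estimate of the central binomial coefficient entering $\gamma_k$; cruder bookkeeping still gives the rate, with a slightly smaller constant.
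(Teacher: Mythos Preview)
Your route is correct and in fact cleaner than the paper's. The key methodological difference: you compute $e_n^2=\|I\|_2^2-\|\ex[I\mid\mathcal P_n]\|_2^2$ by Pythagoras, evaluating each chaos norm via the closed identity $\|I(\phi)^{\diamond m}\diamond I(\psi)\|^2=m!\,\|\phi\|^{2(m-1)}\big(\|\phi\|^2\|\psi\|^2+m\langle\phi,\psi\rangle^2\big)$ from \eqref{eq:NormOFWickProductGaussian}, and so obtain the explicit series in the single parameter $\rho_n$. The paper instead expands $W_t^{\diamond 2m}-(W_t^{\lin})^{\diamond 2m}$ telescopically via \eqref{eq:WickPowersBridge} and \eqref{eq:MainTheorem1}, bounds the resulting double sums $\sum_{i,j}$ termwise (the upper bound passing through a H\"older step \eqref{eq:OptApproxAbsValue7} that incurs the $\ve$-loss), and resums via the generating function \eqref{eq:SeriesBinom}. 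Your splitting at $k\asymp 1/\rho_n$ bypasses the H\"older detour and yields $e_n=O(n^{-1/4})$ for \emph{all} $n$, strictly sharper than the paper's $O(n_k^{-1/4+\ve})$.

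The one place your sketch falls short of the proposition as stated is the explicit constant $t^{3/4}/\sqrt{2\pi}$ in the lower bound. Your tail-only argument (keep $k\geq 1/\rho_{n_k}$, bound $1-(1-\rho_{n_k})^{2k-1}\geq 1-e^{-1}$) delivers the rate $n_k^{-1/4}$ but with a strictly smaller constant, even with a sharp central-binomial estimate and an optimised cutoff: the discarded head $k<1/\rho_{n_k}$ carries a fixed positive fraction of the leading-order mass, so the loss is in the truncation, not in the binomial bound. From your own series one recovers the paper's constant by instead using $1-(1-\rho)^m\geq m\rho(1-\rho)^{m-1}$ and keeping \emph{all} $k$: with $z=1-\rho_{n_k}$ and $\gamma_k(2k-1)^2=\tfrac{2t}{\pi}\binom{2k}{k}4^{-k}$, the second summand alone gives
\[
e_{n_k}^2 \;\geq\; 2t\rho_{n_k}\sum_{k\geq 1}k(2k-1)\gamma_k\,z^{2k-2}
\;\geq\; \tfrac{2t}{\pi z^2}\,\ex[(B^{n_k}_t)^2]\Big((1-z^2)^{-1/2}-1\Big),
\]
which (since $z^{-2}\geq 1$) reproduces the paper's bound at \eqref{eq:OptApproxAbsValue4} for $\ve=1/2$ and hence the constant $t^{3/4}/\sqrt{2\pi}$.
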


\begin{proof}
 $(i)$ \ The Wiener chaos decomposition (see e.g. \cite[p. 65]{Kuo}) gives
\begin{equation}\label{eq:absWtWienerChaos}
|W_t| =  \sqrt{\frac{2t}{\pi}} \sum\limits_{m \geq 0} \frac{(-1)^{m+1} t^{-m}}{(2m-1) (2m)!!}\, W_t^{\diamond 2m}.
\end{equation}
Applying Proposition \ref{prop:WickConditionalExpectation} on \eqref{eq:absWtWienerChaos}, we have
\begin{equation}\label{eq:absWtWienerChaosApprox}
\ex[I|\mathcal{P}_n] = \sqrt{\frac{2t}{\pi}} \sum\limits_{m \geq 0} \frac{(-1)^{m+1} t^{-m}}{(2m-1) (2m)!!}\, (W_t^{\lin})^{\diamond 2m} \diamond W_1.
\end{equation}
Thus, for $t \in \frac{1}{m}\N$ and $n \in m\N$ we obtain $W^{\lin}_t=W_t$ and exact simulation. 

$(ii)$ \ We firstly observe by the orthogonality of $(W_t^{\diamond 2m} -(W_{t}^{\lin})^{\diamond 2m})\diamond W_1$ for different $m$,
\begin{align}
e_n^2 &= \frac{2t}{\pi} \sum\limits_{m \geq 1} \frac{ t^{-2m}}{(2m-1)^2 (2m)!!^2}\ex\left[\left((W_t^{\diamond 2m} -(W_{t}^{\lin})^{\diamond 2m})\diamond W_1\right)^2\right].
\label{eq:OptApproxAbsValue1}
\end{align}
Due to \eqref{eq:MainTheorem1}, the proof is based on upper and lower bounds of 
\[
\ex\left[\left(W_t^{\diamond k-i} \diamond (W_{t}^{\lin})^{\diamond i-1} \diamond W_1\right) \left(W_t^{\diamond k-j} \diamond (W_{t}^{\lin})^{\diamond j-1} \diamond W_1\right)\right]. 
\]

Lower bounds:\ We define for shorthand
\[
z:= \frac{t-\ex[(B^n_t)^2]}{t} \leq 1.
\]
Due to Remark \ref{remark:NormComputations}, it is
\begin{align}
&\min_{X,Y \in \{W_t,W_1,W_t^{\lin}\}}\ex[XY] = \ex[(W_t^{\lin})^2] = t-\ex[(B^n_t)^2] = tz.\label{eq:OptApproxAbsValue2}
\end{align}
Hence, by \eqref{eq:NormOFWickProductGaussian} and \eqref{eq:OptApproxAbsValue2}, we have
\begin{align*}
\displaybreak[0]
&\ex\left[\left(W_t^{\diamond k-i} \diamond (W_{t}^{\lin})^{\diamond i-1} \diamond W_1\right) \left(W_t^{\diamond k-j} \diamond (W_{t}^{\lin})^{\diamond j-1} \diamond W_1\right)\right]\nonumber\\
&\qquad \geq \ex\left[\left(W_t^{\diamond k+1-i} \diamond (W_{t}^{\lin})^{\diamond i-1}\right) \left(W_t^{\diamond k+1-j} \diamond (W_{t}^{\lin})^{\diamond j-1} \right)\right] \geq k! \, (tz)^k.
\end{align*}
Thanks to \eqref{eq:MainTheorem1}, ignoring the second term on the right hand side in \eqref{eq:MainTheorem1}, we obtain 
\begin{align}
&\ex\left[\left((W_t^{\diamond 2m} -(W_{t}^{\lin})^{\diamond 2m})\diamond W_1\right)^2\right] \geq \ex[(B^n_t)^2] (2m)^2 (2m)! \, (tz)^{2m}.\label{eq:OptApproxAbsValue3} 
\end{align}
We observe for sufficiently large $n$ that $z\geq 0.96$ and therefore
\[
\frac{1}{\sqrt{1-z^2}}- 1 = \frac{1}{\sqrt{1-z}}\left(\frac{z^2}{\sqrt{1+z}(1+\sqrt{1-z^2})}\right) \geq \frac{1}{2\sqrt{1-z}}.
\]
Then, making use of  \eqref{eq:OptApproxAbsValue1}, \eqref{eq:OptApproxAbsValue3}, the series
\begin{align}\label{eq:SeriesBinom}
\sum_{m\geq 0} \frac{(2m)! z^{2m}}{(2m)!!^2} = \sum_{m\geq 0} \binom{2m}{m} \left(\frac{z^2}{4}\right)^m = \frac{1}{\sqrt{1-z^2}}
\end{align}
(see e.g. \cite[2.1]{Aigner}) and $\ex[(B^n_t)] = t(1-z)$, we conclude for all $\ve \in (0,1/2]$ and sufficiently large n:
\begin{align}
\displaybreak[0]
e_n^2 
&= \frac{2t}{\pi} \sum\limits_{m \geq 1} \frac{ t^{-2m}}{(2m-1)^2 (2m)!!^2}\ex\left[\left((W_t^{\diamond 2m} -(W_{t}^{\lin})^{\diamond 2m})\diamond W_1\right)^2\right]\nonumber\\
&\geq \ex[(B^n_t)^2] \frac{2t}{\pi} \sum\limits_{m \geq 1} \binom{2m}{m}\left(\frac{z^2}{4}\right)^m = \ex[(B^n_t)^2] \frac{2t}{\pi} \left(\frac{1}{\sqrt{1-z^2}}- 1\right)\nonumber\\
&\geq
\ex[(B^n_t)^2]^{1-\ve} \left(\frac{t^{1+\ve}}{\pi} (1-z)^{\ve-1/2}\right).\label{eq:OptApproxAbsValue4}
\end{align}
For $\ve =1/2$ and the sequence from Proposition \ref{prop:Weyl} we conclude the inequality in \eqref{eq:thmOptApproxAbsValue1}. Otherwise the term $\frac{t^{1+\ve}}{4^{1-\ve}\pi} (1-z)^{\ve-1/2}$ is finite for $0<z<1$ and explodes as $z$ tends to $1$. Thus with the strictly increasing sequence of integers $(n_k)_{k \in \N}$ in Proposition \ref{prop:Weyl}, for all $\ve \in (0,1/2)$ there is no upper bound on the term in \eqref{eq:OptApproxAbsValue4} and we obtain $e_{n_k}^2 \notin\Oo(n_k^{-1/2-\ve})$.\\

Upper bounds:\ Via \eqref{eq:MainTheorem1}, we obtain
\begin{align}
\displaybreak[0]
&\ex\left[\left((W_t^{\diamond k} -(W_{t}^{\lin})^{\diamond k})\diamond W_1\right)^2\right]\nonumber\\ 
&\leq \ex[(B^n_t)^2] \sum\limits_{i,j=1}^{k} \ex\left[\left(W_t^{\diamond k-i} \diamond (W_{t}^{\lin})^{\diamond i-1} \diamond W_1\right) \left(W_t^{\diamond k-j} \diamond (W_{t}^{\lin})^{\diamond j-1} \diamond W_1\right)\right] + \Oo(n^{-2}).\label{eq:OptApproxAbsValueUp}
\end{align}
Thanks to \eqref{eq:NormOFWickProductGaussian} and \eqref{eq_covariances1}, we have
\begin{align}
&\ex\left[\left(W_t^{\diamond k-i} \diamond (W_{t}^{\lin})^{\diamond i-1} \diamond W_1\right) \left(W_t^{\diamond k-j} \diamond (W_{t}^{\lin})^{\diamond j-1} \diamond W_1\right)\right]\leq k! t^{k-1} z^{(i \vee j -1)}.\label{eq:OptApproxAbsValue5}
\end{align}
Since
\begin{align}\label{eq:OptApproxAbsValue6}
\sum_{i,j=1}^{k} z^{i\vee j -1} = 2\sum_{j=1}^{k} (j-1)z^{j-1} + \sum_{j=1}^{k}z^{j-1} 
\end{align}
and $\sum_{j=1}^{k}z^{j-1} \leq k$, it remains to control the first sum on the right hand side. For every $\ve \in (0,1/2)$, the H\"older inequality implies
\begin{align}
\displaybreak[0]
\sum_{j=1}^{k} (j-1)z^{j-1} &\leq \left(\sum_{j=1}^{k}z^{\frac{j-1}{1/2+\ve}}\right)^{1/2+\ve}\left(\sum_{j=1}^{k}(j-1)^{\frac{1}{1/2-\ve}}\right)^{1/2-\ve}\nonumber\\
&\leq \left(\frac{1-z^{\frac{k}{1/2+\ve}}}{1-z^{\frac{1}{1/2+\ve}}}\right)^{1/2+\ve}\left(\int_{0}^{k} x^{\frac{1}{1/2-\ve}}dx \right)^{1/2-\ve}.\label{eq:OptApproxAbsValue7}
\end{align}
For every $t \in (0,1)$ it is $z \in (0,1]$ for sufficiently large $n$. Thus, by $1/(1/2+\ve)>1$ we observe
\begin{align*}
 \displaybreak[0]
(1-z)^{1/2+\ve} \left(\frac{1-z^{\frac{k}{1/2+\ve}}}{1-z^{\frac{1}{1/2+\ve}}}\right)^{1/2+\ve} 
\leq 1.
\end{align*}
Hence, by \eqref{eq:OptApproxAbsValue6}-\eqref{eq:OptApproxAbsValue7}, $(\int_{0}^{k} x^{\frac{1}{1/2-\ve}}dx)^{1/2-\ve} < k^{3/2-\ve}$  and $\ex[(B^n_t)^2] = t(1-z)$, we obtain
\begin{align*}
\ex[(B^n_t)^2]  \sum_{i,j=1}^{k} z^{i\vee j -1} \leq \ex[(B^n_t)^2]^{1/2-\ve} t^{1/2+\ve}\, 3 \, k^{3/2-\ve}.
\end{align*}
Plugging this into \eqref{eq:OptApproxAbsValueUp}-\eqref{eq:OptApproxAbsValue5}, we conclude
\begin{align*}
\displaybreak[0]
&\ex\left[\left((W_t^{\diamond k} -(W_{t}^{\lin})^{\diamond k})\diamond W_1\right)^2\right] \leq \ex[(B^n_t)^2]^{1/2-\ve} \,3 \, k! \, k^{3/2-\ve}t^{k-1}  + \Oo(n^{-2}).
\end{align*}
Due to \eqref{eq:OptApproxAbsValue1} and the Stirling formula
\begin{align}\label{eq:StirlingFormula}
\frac{1}{4^m}\binom{2m}{m} \sim \frac{1}{\sqrt{\pi m}},
\end{align}
we obtain a constant $c>0$ such that 
\begin{align*}
e_n^2 &\leq \ex[(B^n_t)^2]^{1/2-\ve} c \sum\limits_{m \geq 1} m^{-(1+\ve)}  + \Oo(n^{-2}).
\end{align*}
By the sequence $(n_k)_{k \in \N}$ in Proposition \ref{prop:Weyl}, this gives $e_{n_k}^2 \in \Oo(n_k^{-1/2+\ve})$ for all $\ve \in (0,1/2)$.
\end{proof}

\begin{remark}
 Due to  \eqref{eq:NormOFWickProductGaussian} and \eqref{eq_covariances1},  for all $0 \leq i,j \leq k$, we observe
the hypergeometric series
\begin{align}
\displaybreak[0]
&\ex\left[\left(W_t^{\diamond k-i} \diamond (W_{t}^{\lin})^{\diamond i} \right) \left(W_t^{\diamond k-j} \diamond (W_{t}^{\lin})^{\diamond j} \right)\right]\nonumber\\
&= \sum_{l= 0 \vee \left(k-(i+j)\right)}^{k-(i \vee j)}  t^k z^{k-l}\frac{(k-i)! \, (k-j)! \, i!\, j!}{l!\, (k-i-l)!\, (k-j-l)!\, (i+j-k+l)!}.\label{eq:HypergeometricSeries} 
\end{align}
We used the simple bounds $\geq k! t^kz^k$ and $\leq k!t^k z^{i\vee j}$. 
From the proof and computer experiments we conjecture 
\[
e_{n_k} \sim c \, n_k^{-1/4} 
\]
for a constant $c>0$. The computation of the constant $c$ must rely on a more subtle handling of the sum in \eqref{eq:HypergeometricSeries}.
\end{remark}

\begin{remark}
The following example shows that the regularity conditions in Theorem \ref{theorem:OptimalApproxSkorohod} are less sensitive. Let 
\[
I=\int_{0}^{1} g(s)|W_t| dW_s = \int_{0}^{1} g(s)dW_s \diamond |W_t|
\]
for some $g \in C^1([0,1])$ and $t \in \frac{1}{m}\N$ fixed. Then, by Proposition \ref{prop:SkorohodAndWick}, $n \in m\N$ and \eqref{eq:IBPWick}, it is
\[
\ex[I|\mathcal{P}_n] = \left(g(1)W_1 - \sum_{i=1}^{n} \int_{(i-1)/n}^{i/n} g'(s) W_s^{\lin} ds\right) \diamond |W_t|
\]
and thus, making use of the arguments in the proof of Theorem \ref{theorem:OptimalApproxSkorohod}, which carry over to the Wiener chaos decomposition in \eqref{eq:absWtWienerChaos}, we have
\begin{align*}
e_n^2 &= \sum_{i,j=1}^{n} \int_{(i-1)/n}^{i/n} \int_{(j-1)/n}^{j/n} g'(s) g'(u) \ex[(B^n_s \diamond |W_t|)(B^n_u \diamond |W_t|)] ds \,du\\
&= \sum_{i=1}^{n} \int_{(i-1)/n}^{i/n} \int_{(i-1)/n}^{i/n} \ex[B^n_s B^n_u] ds \,du \,g'((i-1)/n)^2 \ex[|W_t|^2] +\Oo(n^{-3})\\
&= \frac{1}{12n^2} \left(\int_{0}^{1} g'(s)^2 ds\right) \ex[|W_t|^2] +\Oo(n^{-3}).
\end{align*}
We notice that the constant $\int_{0}^{1} g'(s)^2 ds\, \ex[|W_t|^2]$ equals $\int_{0}^{1}\ex[\mathcal{L} f(s, t,W_s,W_{t})^2] ds$ for $f(s,t,W_s,W_t) = g(s)|W_t|$ in Theorem \ref{theorem:OptimalApproxSkorohod} if we ignore the discontinuities. This indicates further extensions of Theorem \ref{theorem:OptimalApproxSkorohod} to weaker assumptions. Under the same assumptions, by analogous arguments, for 
\[
I=\int_{0}^{1} g(s)|W_t|\diamond W_s dW_s = \int_{0}^{1} g(s)W_s dW_s \diamond |W_t|,
\]
it is
\begin{align*}
e_n^2 &= \frac{1}{12n^2} \left(\int_{0}^{1} g'(s)^2 \ex\left[\left(|W_t|\diamond W_s\right)^2\right] ds\right) +\Oo(n^{-3}).
\end{align*}
We notice that $|W_t| \diamond W_1^{\diamond 2} \notin L^2(\Omega)$ due to \eqref{eq:absWtWienerChaos} and \eqref{eq:StirlingFormula}. Hence for the next integrand of this type, $(g(s) |W_t| \diamond W_s^{\diamond 2})_{s \in [0,1]}$, the Skorohod integral does not exist.
\end{remark}

\begin{remark}
Inspired by the Wiener chaos decomposition in \eqref{eq:absWtWienerChaosApprox},
for a fixed $T \in (0,1) \setminus \Q$, we define the random variable
\[
X_q = \sum\limits_{m \geq 0} \frac{1}{(2m)^{1+q} (2m)!!}\, W_{\{mT\}}^{\diamond 2m}.
\]
One can easily check by \eqref{eq:SeriesBinom} that $X_q \in L^2(\Omega)$ for all $q \in (-1/2,1/2)$ and the Skorohod integral 
\[
I_q = \int_{0}^{1} X_q dW_s = X_q \diamond W_1 
\]
exists in $L^2$ with
\[
\ex[I_q^2] \leq \sum\limits_{m \geq 0} \frac{(2m+1)!}{(2m)^{2+2q} (2m)!!^2} \{mT\}^{2m} <\infty.
\]
Due to Proposition \ref{prop:Weyl} there is no finite extension of $\P_n$ such that $I_q$ is exactly simulated. We sketch the optimal approximation results since the proofs can be done following the lines in the proof of Proposition \ref{prop:OptimalApproxAbsoluteValue}. Similarly to \eqref{eq:OptApproxAbsValue1}, it is  
\begin{align}
e_n^2 &= \sum\limits_{m \geq 1} \frac{1}{(2m)^{2+2q} (2m)!!^2}\ex\left[\left((W_{\{mT\}}^{\diamond 2m} -(W_{\{mT\}}^{\lin})^{\diamond 2m})\diamond W_1\right)^2\right].
\label{eq:IrregularMSE}
\end{align}
Due to fractional calculus on \eqref{eq:SeriesBinom}, we conjecture for the contained sum
\[
\sum\limits_{m \geq 0} (2m)^{-2q}\binom{2m}{m} \{mT\}^{2m} \asymp (1-\{mT\}^{2})^{-1/2-2q}.
\]
Following the proof of  Proposition \ref{prop:OptimalApproxAbsoluteValue}, we assume with the the sequence from Proposition \ref{prop:Weyl} for all $q \in (-1/2,1/2)$ the asymptotic behaviour
\[
e_{n_k}^2 \asymp n_k^{-1/2+2q}.
\]
\end{remark}

\begin{remark}
Simpler Skorohod integrals such that no finite extension of $\P_n$ is sufficient for exact approximation can be constructed as follows: Let $g : [0,1] \rightarrow [0,1]$ be a function with $g([0,1]) \notin \Q$, $|g([0,1])|=\infty$. Then the following Skorohod integral exists (is an element in the second Wiener chaos) and fulfills
$\int_{0}^{1} W_{g(s)} dW_s  = \sum_{j \in J} a_j W_{s_j} \diamond W_{t_j}$
for an infinite index set $J$. Such examples let us assume that Skorohod integrals of the following type
\[
\int_{0}^{1} f(W_{g(s)}) dW_s 
\]
for sufficiently irregular or discontinuous function $g: [0,1] \rightarrow [0,1]$ and appropriate $f \in C^2(\R)$, do not allow a finite extension of $\P_n$ with exact approximation and the optimal approximation behaves according to the rather irregular case in Theorem \ref{thm:OptApproxIrrational}.
\end{remark}


\begin{thebibliography}{plain}

\bibitem{Aigner} Aigner, M.
\textit{A course in enumeration} Graduate Texts in Mathematics, 238. Springer, Berlin, 2007.

\bibitem{Avram_Taqqu} F. Avram and M. Taqqu, Noncentral limit theorems and Appell polynomials, \textit{Ann. Probab.} \textbf{15} (2) (1987) 767--775. 


\bibitem{Buckdahn_Nualart} Buckdahn, R. and Nualart, D. Linear stochastic differential equations and Wick products. \textit{Probab. Theory Related Fields} \textbf{99} (4) (1994) 501--526.


\bibitem{Di_Nunno_und_so} Di Nunno, G. and {\O}ksendal, B. and Proske, F. \textit{Malliavin calculus for L\'{e}vy processes with applications to finance} Universitext. Springer, Berlin, 2009.

\bibitem{Fournie} Fourni\'e, E. and Lasry, J.-M. and Lebuchoux, J. and Lions, P.-L. and Touzi, N.
Applications of Malliavin calculus to Monte Carlo methods in finance. 
\textit{Finance Stoch.} \textbf{3} (4) (1999) 391--412.



\bibitem{Holden_Buch} Holden H. and {\O}ksendal, B. and Ub{\o}e, J. and Zhang, T.
\textit{Stochastic Partial Differential Equations. A Modeling, White Noise
    Functional Approach. Second Edition} Springer, New York, 2010.



\bibitem{Jentzen_MG_Y} Jentzen, A. and M\"uller-Gronbach, T. and Yaroslavtseva, L.
On stochastic differential equations with arbitrary slow convergence rates for strong approximation. \textit{Commun. Math. Sci.} \textbf{14} (6) (2016) 1477--1500.


\bibitem{Janson} Janson, S.
\textit{Gaussian Hilbert Spaces.}, Cambridge: Cambridge University Press, 1997.


\bibitem{Karatzas_Shreve} Karatzas, I. and Shreve, S. E.
\textit{Brownian motion and stochastic calculus.},
Second edition. Graduate Texts in Mathematics, 113. Springer. New York, 1991.

\bibitem{Kloeden_Platen}
Kloeden, P. and Platen, E. \textit{Numerical solution of stochastic differential equations.}
Applications of Mathematics, 23. Springer-Verlag, Berlin, 1992.


\bibitem{Kuo} Kuo, H.-H.
\textit{White Noise Distribution Theory.}
Probability and Stochastics Series. Boca Raton, FL: CRC Press, 1996.


\bibitem{Mueller_Gronbach} M\"uller-Gronbach, T.
Optimal pointwise approximation of SDEs based on Brownian motion at discrete points. \textit{Ann. Appl. Probab.} \textbf{14}, (4) (2004) 1605--1642.


\bibitem{NP} Neuenkirch, A. and Parczewski, P. Optimal approximation of Skorohod integrals. Accepted to \textit{J. Theoret. Probab.} (2016).


\bibitem{Nualart} Nualart, D.
\textit{The Malliavin Calculus and Related Topics.} Second Edition. Probability and its Applications. Springer, New York, 2006. 


\bibitem{P2} Parczewski, P. A Wick functional limit theorem. \textit{Probab. Math. Statist.} \textbf{34} (1) (2014) 127--145.



 \bibitem{Przybylowicz} Przyby\l owicz, P.
Optimal sampling design for approximation of stochastic It\^o integrals with application to the nonlinear Lebesgue integration. 
\textit{J. Comput. Appl. Math.} \textbf{245} (2013) 10--29. 


\bibitem{Weyl} Weyl, H. \"Uber die Gleichverteilung von Zahlen mod. Eins, \textit{Math. Ann.} \textbf{77}, (3) (1916) 313--352.

\end{thebibliography}
\end{document}